\title{On Riemann type relations for theta functions on bounded symmetric domains of type $I$}
\author{Atsuhira Nagano}
\DeclareFontFamily{U}{mathx}{}
\DeclareFontShape{U}{mathx}{m}{n}{ <-> mathx10 }{}
\DeclareSymbolFont{mathx}{U}{mathx}{m}{n}
\DeclareMathAccent{\widecheck}{0}{mathx}{"71}
\def\bigzerou{\smash{\lower1.7ex\hbox{\b 0}}}
\newtheorem{thm}{Theorem}[section]
\newtheorem{lem}{Lemma}[section]
\newtheorem{prop}{Proposition}[section]
\newtheorem{rem}{Remark}[section]
\def\comment#1{{ }}
\begin{document}
\maketitle
\setlength{\baselineskip}{13 pt}
 \renewcommand{\thefootnote}{\fnsymbol{footnote}}

\footnote[0]{Keywords:  theta functions ; symmetric domains  }
\footnote[0]{Mathematics Subject Classification 2020:  Primary 11F27 ; Secondary 32M15, 14K25}
\setlength{\baselineskip}{14 pt}

\begin{abstract}
We provide a practical technique to obtain plenty of algebraic relations for theta functions on the bounded symmetric domains of type $I$.
In our framework, each theta relation is controlled by combinatorial properties of a pair $(T,P)$ of a regular matrix $T$ over an imaginary quadratic field and a positive-definite Hermitian matrix $P$ over the complex number field.
\end{abstract}

\section*{Introduction}
Theta functions are special functions which play significant roles in number theory, geometry, combinatorics and mathematical physics.
The $g^2$-dimensional bounded symmetric domain $\mathbb{H}_I^{(g)}$ of type $I$ is the homogeneous space of the unitary group $U(g,g)$ (see (\ref{SymmI})).
In this paper,
we will present  a practical technique
to obtain  plenty of algebraic relations of theta functions on $\mathbb{H}_I^{(g)}$.
Relations obtained by our technique
can be recognized as natural counterparts of the classical Riemann theta relations for the Jacobi theta functions (see (\ref{JacobiTheta}) and (\ref{RiemannOri})). 
For this reason, in this paper,
 we will call our results
the Riemann type relations for the theta functions on $\mathbb{H}_I^{(g)}$.

Let us recall  classical Riemann theta relations.
The Jacobi theta functions are defined as
\begin{align}\label{JacobiTheta}
\mathbb{C} \times \mathbb{H}\ni (z,\tau)
\mapsto
\vartheta_{ab} (z,\tau)=\sum_{n\in \mathbb{Z}} \exp\left( \pi \sqrt{-1} \left(n+\frac{a}{2}\right)^2 \tau + 2\pi \sqrt{-1} \left(n+\frac{a}{2}\right)\left(z+\frac{b}{2}\right) \right)\in \mathbb{C},
\end{align}
where $\mathbb{H}=\{\tau \in \mathbb{C} \mid {\rm Im}(\tau) >0\} $ and $a,b \in \{0,1\}$.
Classical Riemann theta relations are non-trivial quartic relations
which the theta functions (\ref{JacobiTheta}) satisfy.
One can find them in \cite{Mu} Chap.I Sect.5 (see also Section 1.2 of this paper).
Here, let us give a simple example:
\begin{align}\label{RiemannOri}
&2 \vartheta_{00} (x',\tau) \vartheta_{00} (y',\tau) \vartheta_{00} (u',\tau) \vartheta_{00} (v',\tau) \notag\\
&=\vartheta_{00} (x,\tau) \vartheta_{00} (y,\tau) \vartheta_{00} (u,\tau) \vartheta_{00} (v,\tau) 
+\vartheta_{10} (x,\tau) \vartheta_{10} (y,\tau) \vartheta_{10} (u,\tau) \vartheta_{10} (v,\tau) \notag\\
&+\vartheta_{01} (x,\tau) \vartheta_{01} (y,\tau) \vartheta_{01} (u,\tau) \vartheta_{01} (v,\tau)
+\vartheta_{11} (x,\tau) \vartheta_{11} (y,\tau) \vartheta_{11} (u,\tau) \vartheta_{11} (v,\tau), 
\end{align}
where 
$x',y',u',v'$ are complex variables related to $x,y,u,v$ by the orthogonal matrix 
\begin{align}\label{TOri}
T_0=\frac{1}{2} \begin{pmatrix} 1 & 1 & 1 &1 \\  1 & -1& 1 &-1 \\  1& 1 & -1& -1 \\  1& -1& -1 &1  \end{pmatrix}
\end{align}
as ${}^t (x',y',u',v') = T_0 {}^t (x,y,u,v)$.
This example of classical Riemann theta relations has various applications.
\begin{itemize}
\item[\text{[a]}]
Let us recall  a simple and notable example.
Just putting $x=y=u=v=0 $ to (\ref{RiemannOri}), we immediately obtain 
$$
\vartheta_{00}^4(\tau) = \vartheta_{10}^4(\tau) +\vartheta_{01}^4(\tau),
$$
where $\vartheta_{ab}(\tau)=\vartheta_{ab}(0,\tau).$
This relation is  called the Jacobi theta identity.
This is important to study elliptic modular forms for the principal congruence subgroup in ${\rm SL}(2,\mathbb{Z})$ of level $2$.
\end{itemize}

The Jacobi theta functions satisfy other interesting formulas.
\begin{itemize}
\item[\text{[b]}] 
There are ``half formulas'' for the Jacobi theta functions (\ref{JacobiTheta}). 
For detail, see (\ref{Jacobihalf-angle}).
They play an important role in the study of the arithmetic-geometric mean by Gauss (see \cite{C} ; see also the beginning of Section 2.1).
\end{itemize}

For a ring $R$,
let ${\rm Mat}(g,h;R)$ be the abelian group of $g\times h$ matrices over $R$,
and $R^g$ be the $R$-module of column vectors of size $g$ over $R$.
The well-known Riemann theta function on the $\frac{g(g+1)}{2}$-dimensional Siegel upper half plane
$\mathfrak{S}_g =\{\Omega\in {\rm Mat}(g,g;\mathbb{C}) \mid {}^t\Omega=\Omega, {\rm Im}(\Omega) \text{ is positive definite}\}$
is defined as
\begin{align}\label{RiemannThetaCon}
\mathfrak{S}_g \ni \Omega
\mapsto
\vartheta(\Omega)
=\sum_{n\in \mathbb{Z}^g} \exp \left(\pi \sqrt{-1}  {}^t n \Omega n   \right)  
\in \mathbb{C}.
\end{align}

\begin{itemize}
\item[\text{[c]}] 
One can define Riemann theta functions with characteristics (see (1.14)).
They satisfy various non-trivial homogeneous relations.
For instance,  let us refer to the relation (\ref{RiemannQuad}),
which is   important in algebraic geometry and number theory.
This is essential in studies of abelian varieties,
because  Riemann theta functions  are  associated with  moduli spaces of abelian varieties in nature (for example, see \cite{I} and \cite{K}).  
\end{itemize}

There are various proofs of the classical Riemann relations such as (\ref{RiemannOri}).
To the best of the author's knowledge,
each proof is based on  combinatorial properties derived from the orthogonal matrix $T_0$ of  (\ref{TOri}).
Additionally,
one can prove the above results [b] and [c] 
on the basis of  combinatorial properties of  suitable matrices $T$, 
which are taken instead of $T_0$,
and the orthogonality of characters
 (for example, see \cite{Mu} Chap.II).
Here, we remark that the replaced matrices $T$ are not necessarily orthogonal matrices.
In the present paper,
we will call
 the algebraic relations suggested in [b] and [c]  Riemann type relations,
 following the naming of (\ref{RiemannOri}).

The main purpose of this paper is
to obtain  generalizations of  the above classical theta relations.
We study  theta functions on
the $g^2$-dimensional bounded symmetric domain 
\begin{align}\label{SymmI}
\mathbb{H}_I^{(g)}=\left\{ W\in {\rm Mat} (g,g; \mathbb{C}) \hspace{1mm} \Bigg| \hspace{1mm}  \frac{W-{}^t \overline{W}}{2\sqrt{-1}} \text{ is a positive-definite Hermitian matrix} \right\}
\end{align}
of type $I$.
This space naturally contains the Siegel upper half plane $\mathfrak{S}_g$.
Moreover,
the unitary group $U(g,g)$ acts on $\mathbb{H}_I^{(g)}$ in a canonical way.
Thus, we have motives to study theta functions on $\mathbb{H}_I^{(g)}$.
In fact,
by using the lattice $\mathfrak{O}_K^g$ given by the ring $\mathfrak{O}_K$ of integers of an imaginary quadratic field $K$,
we have the theta function
\begin{align}\label{ThetaCon}
\mathbb{H}_I^{(g)} \ni W
\mapsto
\Theta (W)
=\sum_{n\in \mathfrak{O}_K^g} \exp \left(\pi \sqrt{-1}  {}^t \overline{n} W n    \right)
\in \mathbb{C}.
\end{align}
Such a theta function is studied in \cite{F}, \cite{DK} or \cite{Ma} (see also (\ref{theta1}) and Remark \ref{RemMatTheta}). 
The main theorem (Theorem \ref{ThmRiemannRel}) of this paper gives  a  systematic way to  obtain various  relations for the theta function  (\ref{ThetaCon}).
We can regard our results as  generalizations of the above mentioned classical results [a], [b] and [c].
Therefore, we call our results Riemann type  relations for the theta function (\ref{ThetaCon}).
More precisely, in our framework,
a pair $(T, P)$ determines an algebraic theta relation. 
Here, $T$ is a matrix over  $K$, 
which is a counterpart of $T_0$ of (\ref{TOri}). 
Note that we do not need to assume that $T$ is an orthogonal matrix or a unitary matrix.
Also, $P$ is a positive-definite Hermitian matrix over $K$.
For detail, see Theorem \ref{ThmRiemannRel}.

Probably, 
the biggest difference between the theta functions (\ref{ThetaCon}) and (\ref{RiemannThetaCon})
is whether we need an imaginary quadratic field $K$ or not.
In fact, the  theta function (\ref{ThetaCon}) depends on the imaginary quadratic field $K$.
However,
 such a substitutability of $K$ is necessary,
when we practically study the theta functions.  
Actually, 
it is well known that the Riemann theta function (\ref{RiemannThetaCon}) is useful to study the moduli space of $\mathscr{K}_{\rm Kum}$,
where $\mathscr{K}_{\rm Kum}$ is the family of Kummer surfaces coming from principally polarized abelian surfaces (see \cite{I}).
\cite{MSY} constructs a family $\mathscr{K}_{{\rm MSY}}$ of $K3$ surfaces which is an extension of $\mathscr{K}_{\rm Kum}$.
By virtue of the theta function  (\ref{ThetaCon}) associated with $K=\mathbb{Q}\left(\sqrt{-1}\right)$,
we are able to study the moduli space of  $\mathscr{K}_{{\rm MSY}}$.
On the other hand,
\cite{NS} studies another family $\mathscr{K}_{NS}$ of $K3$ surfaces. 
Although $\mathscr{K}_{\rm NS}$ also contains $\mathscr{K}_{\rm Kum}$ as a subfamily,
this  is different from $\mathscr{K}_{{\rm MSY}}$.
In order to study the moduli space of $\mathscr{K}_{NS}$,
we need the theta function  (\ref{ThetaCon}) for $K=\mathbb{Q}\left(\sqrt{-3}\right)$ (see Table 1).
\begin{table}[h]
\center
\begin{tabular}{cccccc}
\toprule
$\mathscr{K}_{\rm Kum}$  & $\mathscr{K}_{{\rm MSY}}$ &  $\mathscr{K}_{{\rm NS}}$  \\
 \midrule
Riemann theta of (\ref{RiemannThetaCon})
     &  Theta of (\ref{ThetaCon}) for  $K=\mathbb{Q}\left(\sqrt{-1}\right)$ & Theta of (\ref{ThetaCon}) for  $K=\mathbb{Q}\left(\sqrt{-3}\right)$ \\
  \bottomrule
\end{tabular}
\caption{Families of $K3$ surfaces and related theta functions  }
\end{table}

Algebraic relations of theta functions enable us to obtain interesting results in number theory and geometry.
To the best of the author's knowledge,
there are several deep research related with $\mathscr{K}_{\rm MSY}$ since the publication of \cite{MSY} and \cite{Mat}.
In this case,
the theta function (\ref{ThetaCon}) is  defined by the use of  the  ring of Gaussian integers.
Theta relations for them lead
a study of an arithmetic hyperbolic $3$-manifold
(a generalization of the arithmetic-geometric mean, resp.)
in \cite{MNY}
 (\cite{MT}, resp.).

Thus, there is a necessity to study algebraic relations for  the theta function (\ref{ThetaCon}) on $\mathbb{H}_I^{(g)}$.
 Our main theorem provides a handy tool to obtain non-trivial theta relations.
 It is valid for arbitrary imaginary quadratic fields $K$.
 The author expects that
 our technique is beneficial to obtain plenty of theta relations 
 and
 it accelerates further research of moduli spaces of families of algebraic varieties
 like $\mathscr{K}_{\rm MSY}$ or  $\mathscr{K}_{{\rm NS}}$.

The contents of this paper are as follows.
In Section 1, 
we introduce a new theta function $\Theta^P\begin{bmatrix}A_0 \\ B_0 \end{bmatrix}(W)$,
where $A_0,B_0\in {\rm Mat}(g,h;K)$ and $P\in {\rm Mat}(h,h;\mathbb{C})$ is a positive-definite Hermitian matrix (see (\ref{ThetaAB})).
We prove an algebraic relation for $\Theta^P\begin{bmatrix}A_0 \\ B_0 \end{bmatrix}(W)$ (Theorem \ref{ThmRiemannRel}).
This is a natural counterpart of the formula of \cite{Mu} Theorem 6.3 in Chap.II
(see also Section 1.2).
If $P$ is a  matrix over $\mathbb{Q}$,
according to Theorem \ref{ThmQ},
$\Theta^P$ is related to the theta function of (\ref{ThetaCon}).
We will recognize theta relations derived from Theorem \ref{ThmRiemannRel} as generalizations of the classical Riemann type relations  such as (\ref{RiemannOri}), [a], [b] and [c]. 
By taking  suitable pairs $(T,P)$ and applying Theorem \ref{ThmRiemannRel},
we obtain   non-trivial theta relations by  relatively simple calculations.
In order to see practical effects of our main result,
we will apply it in Section 2 for appropriately chosen   $(T,P)$.

\section{Relations for theta functions on $\mathbb{H}_I^{(g)}$}

\subsection{Main result}

Throughout this paper, $K$ denotes the imaginary quadratic field $\mathbb{Q}\left(\sqrt{-d}\right)$ for a positive square-free integer $d$, and $\mathfrak{O}_K=\mathbb{Z}+\delta \mathbb{Z}$ denotes the ring of integers of $K$, where
  \begin{align*} 
\delta=
\begin{cases}
 \sqrt{-d}  \quad  &(-d \not\equiv 1 (\text{mod } 4)), \\
 \frac{1+\sqrt{-d}}{2} \quad  &(-d \equiv 1 (\text{mod } 4)).
\end{cases}
\end{align*}
Note that it holds $|\delta|^2 \in \mathbb{Z}$.

As mentioned in Introduction, 
$\mathbb{H}_I^{(g)}$  in (\ref{SymmI}) stands for the $g^2$-dimensional bounded symmetric domain of type $I$.
Here, we remark that every $W\in \mathbb{H}_I^{(g)}$ can be decomposed into the sum of a Hermitian matrix and $\sqrt{-1}$ times a positive-definite Hermitian matrix:
\begin{align}\label{WDec}
W=\frac{W+{}^t \overline{W}}{2} 
+\sqrt{-1} \frac{W-{}^t \overline{W}}{2 \sqrt{-1}}. 
\end{align}

Let $P \in {\rm Mat} (h,h; \mathbb{C})$ be a positive-definite Hermitian matrix. 
 Let us define a theta function 
\begin{align*}
\Theta^P (W) =
\sum_{N \in {\rm Mat}(g,h;\mathfrak{O}_K) }
\exp \left(\pi \sqrt{-1} {\rm Tr} \left({}^t \overline{N} W N P\right) \right)
\end{align*}
on $\mathbb{H}_I^{(g)}$ associated with the imaginary quadratic field  $K$.
The right hand side 
is an absolutely and uniformly convergent series  on any compact set in $\mathbb{H}_I^{(g)}$.

Additionally, we define the  theta function with characteristics $A_0, B_0 \in {\rm Mat} (g,h;K)$
\begin{align}
\label{ThetaAB}
\Theta^P\begin{bmatrix} A_0 \\ B_0 \end{bmatrix} (W) =\sum_{N\in {\rm Mat}(g,h;\mathfrak{O}_K)} \exp \left(\pi \sqrt{-1} {\rm Tr} \left({}^t \overline{(N+A_0)} W (N+A_0) P\right) +2\pi \sqrt{-1} {\rm Re} {\rm Tr} \left({}^t \overline{(N+A_0)} B_0\right)  \right)
\end{align}
on $\mathbb{H}_I^{(g)}$ 
associated with $K$.

Here, let us briefly see the convergence of (\ref{ThetaAB}) just in case.
Since $P$ is a positive-definite Hermitian matrix,
there is a unitary matrix $U$ such that $\overline{U} P {}^tU=\text{diag}(\gamma_1,\ldots,\gamma_h)$ with $\gamma_j >0$, where $\text{diag}(\gamma_1,\ldots,\gamma_h)$ is the diagonal matrix with diagonal entries $\gamma_1,\ldots, \gamma_h.$
Then, putting $N'=(\nu_1,\ldots,\nu_h) = (N+A_0) {}^t U$,
we have
$
{\rm Tr}  \left({}^t \overline{(N+A_0)} W (N+A_0) P\right) = \sum_{j=1}^h \gamma_j {}^t\overline{\nu_j} W \nu_j,
$
where ${\rm Tr}$ is the matrix trace.
Thereby, from (\ref{WDec}), we obtain
\begin{align*}
&\sum_{N\in {\rm Mat}(g,h;\mathfrak{O}_K)} \Big|\exp \left(\pi \sqrt{-1} {\rm Tr} \left({}^t \overline{(N+A_0)} W (N+A_0) P\right) +2\pi \sqrt{-1} {\rm Re} {\rm Tr} \left({}^t \overline{(N+A_0)} B_0\right)  \right)\Big| \\
&=\prod_{j=1}^h \sum_{\nu_j} \exp\left(-\gamma_j  {}^t\overline{\nu_j} \frac{W-{}^t\overline{W}}{2 \sqrt{-1}}\nu_j \right).
\end{align*}
According to (\ref{SymmI}),
we can compare the right hand side of (\ref{ThetaAB}) with a product of the Gaussian integrals $\int_\mathbb{R} e^{-X^2}dX <\infty$.
Thus, we can check the convergence of (\ref{ThetaAB}).

We set
\begin{align*}
\widehat{B}=
\begin{cases}
B  \quad \quad  &(-d \not \equiv 1 \hspace{1mm} (\text{mod } 4)), \\
 2B \quad \quad &(-d  \equiv 1 \hspace{1mm} (\text{mod } 4)), \\
\end{cases}
\end{align*}
for any $B\in {\rm Mat}(g,h; K)$.

Using the above notations, we have the following theorem.

\begin{thm} (Riemann type relation of theta functions on $\mathbb{H}_I^{(g)}$) \label{ThmRiemannRel}
Let $K$ be an imaginary quadratic field.
Let $A_0,B_0\in {\rm Mat} (g,h; K)$.
Suppose $(T,P)$ is a pair of matrices satisfying
\begin{itemize}
\item[(i)]
$T \in {\rm GL}_h (K)$,
\item[(ii)]
 $P \in {\rm Mat}(h,h; \mathbb{C})$ is a positive-definite Hermitian matrix.
\end{itemize}
Also, let $Q$ be a positive-definite Hermitian matrix given by
$Q= {}^t \overline{T} P T$.
Then, the theta function (\ref{ThetaAB}) associated with $K$ satisfies the relation
\begin{align}\label{RiemannRel}
\Theta^{Q}\begin{bmatrix} A_0 {}^t \overline{T}^{-1} \\ B_0 T \end{bmatrix} (W) 
= \frac{1}{{}^\sharp G_2} \sum_{B\in G_2} \sum_{A\in G_1} \exp\left( -2\pi \sqrt{-1} {\rm Re} {\rm Tr} \left({}^t \overline{A_0} \widehat{B} \right) \right) \cdot \Theta^P \begin{bmatrix} A_0+A\\ B_0+\widehat{B} \end{bmatrix} (W).
\end{align}
Here, $G_1$ and $G_2$ are finite abelian groups defined as
\begin{itemize}
\item $G_1={\rm Mat}(g,h; \mathfrak{O}_K) {}^t \overline{T}/ \left({\rm Mat}(g,h; \mathfrak{O}_K) {}^t \overline{T} \cap {\rm Mat}(g,h; \mathfrak{O}_K) \right) $,
\item $G_2= {\rm Mat}(g,h; \mathfrak{O}_K) T^{-1}/\left({\rm Mat}(g,h; \mathfrak{O}_K) T^{-1} \cap {\rm Mat}(g,h; \mathfrak{O}_K) \right) $,
\end{itemize}
and ${}^\sharp G_2$ denotes the order of $G_2$.
\end{thm}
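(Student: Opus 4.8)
The plan is to adapt the proof of the classical counterpart in \cite{Mu} Chap.II: reduce the left-hand side to a single lattice-coset sum by a substitution governed by $Q={}^t\overline{T}PT$, unfold the right-hand side into a double finite sum, and then collapse everything by an orthogonality relation for characters of a finite abelian group. Throughout write $L={\rm Mat}(g,h;\mathfrak{O}_K)$ and, for $B\in{\rm Mat}(g,h;K)$, abbreviate the Gaussian and linear factors
\[
g(X)=\exp\left(\pi\sqrt{-1}\,{\rm Tr}\left({}^t\overline{X}WXP\right)\right),\qquad e_B(X)=\exp\left(2\pi\sqrt{-1}\,{\rm Re}\,{\rm Tr}\left({}^t\overline{X}B\right)\right),
\]
so that $\Theta^P\begin{bmatrix}A_0\\B_0\end{bmatrix}(W)=\sum_{X\in A_0+L}g(X)e_{B_0}(X)$. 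First I would treat the left-hand side. Using $Q={}^t\overline{T}PT$, the cyclicity of the trace, and $\overline{{}^t\overline{T}}={}^tT$, one verifies the two bookkeeping identities ${\rm Tr}\left({}^t\overline{X}WXQ\right)={\rm Tr}\left({}^t\overline{\left(X{}^t\overline{T}\right)}W\left(X{}^t\overline{T}\right)P\right)$ and ${\rm Re}\,{\rm Tr}\left({}^t\overline{X}B_0T\right)={\rm Re}\,{\rm Tr}\left({}^t\overline{\left(X{}^t\overline{T}\right)}B_0\right)$. Applying them with $X=M+A_0{}^t\overline{T}^{-1}$, $M\in L$, and setting $N=X{}^t\overline{T}=M{}^t\overline{T}+A_0$ converts the defining series of the left-hand side into a sum over the coset $A_0+L{}^t\overline{T}$:
\begin{equation}
\Theta^{Q}\begin{bmatrix}A_0{}^t\overline{T}^{-1}\\B_0T\end{bmatrix}(W)=\sum_{N\in A_0+L{}^t\overline{T}}g(N)\,e_{B_0}(N).\tag{$\ast$}
\end{equation}
This is the only step using hypotheses (i), (ii) and the definition of $Q$, and it is routine.

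Next I would unfold the right-hand side. Writing $\Theta^P\begin{bmatrix}A_0+A\\B_0+\widehat{B}\end{bmatrix}(W)=\sum_{N\in A_0+A+L}g(N)e_{B_0}(N)e_{\widehat{B}}(N)$ and absorbing the scalar $\exp\left(-2\pi\sqrt{-1}\,{\rm Re}\,{\rm Tr}\left({}^t\overline{A_0}\widehat{B}\right)\right)=e_{\widehat{B}}(A_0)^{-1}$ into the linear factor gives $e_{\widehat{B}}(N-A_0)$. Since $A\mapsto A+L$ realizes $G_1\cong (L{}^t\overline{T}+L)/L$, the cosets $A_0+A+L$ for $A\in G_1$ are pairwise disjoint and exhaust $A_0+(L{}^t\overline{T}+L)$. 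Interchanging the finite sums with the (absolutely convergent) theta series, the right-hand side becomes
\begin{equation}
\frac{1}{{}^\sharp G_2}\sum_{N\in A_0+L{}^t\overline{T}+L}g(N)\,e_{B_0}(N)\left(\sum_{B\in G_2}e_{\widehat{B}}(N-A_0)\right).\tag{$\ast\ast$}
\end{equation}
Comparing $(\ast)$ with $(\ast\ast)$, the theorem reduces to the single character identity, to be proved for $x=N-A_0$ running over $L{}^t\overline{T}+L$,
\begin{equation}
\frac{1}{{}^\sharp G_2}\sum_{B\in G_2}\exp\left(2\pi\sqrt{-1}\,{\rm Re}\,{\rm Tr}\left({}^t\overline{x}\,\widehat{B}\right)\right)=\begin{cases}1,&x\in L{}^t\overline{T},\\0,&x\in\left(L{}^t\overline{T}+L\right)\setminus L{}^t\overline{T}.\end{cases}\tag{$\dagger$}
\end{equation}

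The heart of the proof, and the step I expect to be the main obstacle, is $(\dagger)$, which I would establish as orthogonality of characters on the finite abelian group $H=(L{}^t\overline{T}+L)/L{}^t\overline{T}$. The plan is to show that $(x,B)\mapsto\exp\left(2\pi\sqrt{-1}\,{\rm Re}\,{\rm Tr}\left({}^t\overline{x}\widehat{B}\right)\right)$ descends to a \emph{perfect} pairing $H\times G_2\to\mathbb{C}^\times$, so that $B$ identifies $G_2$ with the dual group $\widehat{H}$; orthogonality then yields simultaneously the normalization ${}^\sharp G_2={}^\sharp H$ and $(\dagger)$. Two points need care. The first is well-definedness, i.e.\ integrality: one must check ${\rm Re}\,{\rm Tr}\left({}^t\overline{x}\widehat{B}\right)\in\mathbb{Z}$ whenever $x\in L{}^t\overline{T}$ or $B\in LT^{-1}\cap L$, and this is exactly where the operation $\widehat{\,\cdot\,}$ (the factor $2$ when $-d\equiv 1\ (\mathrm{mod}\ 4)$) enters, compensating for the fact that ${\rm Re}\,{\rm Tr}\left({}^t\overline{\,\cdot\,}\,\cdot\right)=\tfrac12{\rm Tr}_{K/\mathbb{Q}}$ on the entries and is only half-integral on $\mathfrak{O}_K$ in that case. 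The second and more delicate point is non-degeneracy: I would prove it by computing the dual lattice of $L{}^t\overline{T}$ with respect to ${\rm Re}\,{\rm Tr}\left({}^t\overline{\,\cdot\,}\,\cdot\right)$, which comes out to be $L^{\vee}T^{-1}$ with $L^{\vee}={\rm Mat}(g,h;\mathfrak{O}_K^{\vee})$ and $\mathfrak{O}_K^{\vee}$ the trace dual (inverse different) of $\mathfrak{O}_K$, and then verifying that, through $\widehat{\,\cdot\,}$, the classes of $G_2$ match precisely the characters of $H$. Keeping exact track of $\mathfrak{O}_K$ versus $\mathfrak{O}_K^{\vee}$ and of the power of $\delta$ relating them is the genuinely technical part; once it is settled, substituting $(\dagger)$ into $(\ast\ast)$ collapses the sum onto $x\in L{}^t\overline{T}$ and reproduces $(\ast)$, which completes the proof.
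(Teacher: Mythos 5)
Your $(\ast)$, $(\ast\ast)$ and the reduction of the theorem to the character identity $(\dagger)$ reproduce the paper's proof exactly: $(\ast)$ is the computation (\ref{LHS}), $(\ast\ast)$ is (\ref{RHSp}), and $(\dagger)$ is the identity (\ref{SCh}) divided by ${}^\sharp G_2$. The only divergence is how $(\dagger)$ is settled: the paper does not set up a perfect pairing; it checks that $B\mapsto\exp\left(2\pi\sqrt{-1}\,{\rm Re}\,{\rm Tr}\left({}^t\overline{M}\widehat{B}\right)\right)$ is a well-defined character of $G_2$, quotes Lemma \ref{LemRingInt} to assert that this character is trivial if and only if $M\in{\rm Mat}(g,h;\mathfrak{O}_K)\,{}^t\overline{T}$, and then applies orthogonality of characters.

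However, your key non-degeneracy step fails in general, and your own dual-lattice formula is precisely what exposes it. Under the $\widehat{\cdot}$-normalized pairing ${\rm Re}\,{\rm Tr}\left({}^t\overline{\,\cdot\,}\,\widehat{\cdot}\,\right)$, the dual of $\mathfrak{O}_K$ is $\mathfrak{O}_K^\vee=\frac{1}{\sqrt{-d}}\mathfrak{O}_K$, which equals $\mathfrak{O}_K$ only for $K=\mathbb{Q}\left(\sqrt{-1}\right)$; consequently $\widehat{H}$ is represented by $L^\vee T^{-1}$, not by $LT^{-1}$, and the natural map $G_2\to\widehat{H}$ can be degenerate even though ${}^\sharp G_2={}^\sharp H$. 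Concretely, take $g=h=1$, $K=\mathbb{Q}\left(\sqrt{-3}\right)$, $T=(\sqrt{-3})$, $P=(1)$, $A_0=B_0=0$: then $H\cong G_2\cong\mathbb{Z}/3\mathbb{Z}$, yet $2{\rm Re}\left(\overline{m}B\right)\in\mathbb{Z}$ for every $m\in\mathfrak{O}_K$ and every $B\in\frac{1}{\sqrt{-3}}\mathfrak{O}_K$, so the pairing is identically trivial and $(\dagger)$ fails at $x=1\in\left(L{}^t\overline{T}+L\right)\setminus L{}^t\overline{T}$. Since your reduction is an equivalence, this falsifies (\ref{RiemannRel}) itself for this admissible pair: the left-hand side is $\sum_{n\in\mathfrak{O}_K}\exp\left(3\pi\sqrt{-1}\,\overline{n}Wn\right)$, while the right-hand side collapses to $\sum_{n\in\mathfrak{O}_K}\exp\left(\pi\sqrt{-1}\,\overline{n}Wn\right)$. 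So the obstruction is not peculiar to your route: the paper's Lemma \ref{LemRingInt} makes the same slip, concluding $s_{ab}\in\mathfrak{O}_K$ from ${\rm Re}\left(\ell\overline{s_{ab}}\right)\in\mathbb{Z}$ (resp.\ $2{\rm Re}\left(\ell\overline{s_{ab}}\right)\in\mathbb{Z}$) for all $\ell\in\mathfrak{O}_K$, when this only yields $s_{ab}\in\mathfrak{O}_K^\vee$. Your program is correct verbatim for $K=\mathbb{Q}\left(\sqrt{-1}\right)$ (the self-dual case, which covers the Matsumoto-type relation (\ref{MatRel})), and $(\dagger)$ holds vacuously whenever $T^{-1}\in{\rm Mat}(h,h;\mathfrak{O}_K)$, since then $G_2=\{0\}$ and, $\mathfrak{O}_K$ being stable under conjugation, $L\subset L{}^t\overline{T}$, so the index set in $(\ast\ast)$ never leaves $L{}^t\overline{T}$; these two situations cover every application in Section 2. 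But in the generality actually stated in the theorem, neither your argument nor the paper's can close the gap at $(\dagger)$ — and, as the example shows, the statement itself is false there without an additional hypothesis (such as $T^{-1}$ integral, or $K=\mathbb{Q}\left(\sqrt{-1}\right)$, or a self-duality condition relating $T$ to $\mathfrak{O}_K^\vee$).
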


\begin{proof}
By the definition, we have
\begin{align}\label{LHS}
\notag
&\Theta^{Q}\begin{bmatrix} A_0 {}^t\overline{T}^{-1} \\ B_0 T \end{bmatrix}(W) \notag\\
&=\sum_{N\in {\rm Mat}(g,h;\mathfrak{O}_K)} 
\exp\bigg(\pi \sqrt{-1} {\rm Tr}\left({}^t \overline{\left(N+A_0 {}^t \overline{T}^{-1}\right)} W \left(N+A_0 {}^t \overline{T}^{-1}\right)
 \left({}^t\overline{T} P T \right) \right) \notag\\
 &\hspace{5cm}  +2\pi \sqrt{-1} {\rm Re} {\rm Tr} \left({}^t \overline{\left(N+A_0 {}^t \overline{T}^{-1}\right)}  B_0 T \right)   \bigg) \notag\\
&\notag
=\sum_{N\in {\rm Mat}(g,h;\mathfrak{O}_K)} \exp\Big( \pi \sqrt{-1} {\rm Tr} \left( {}^t \overline{(N {}^t \overline{T} + A_0)} W (N {}^t\overline{T} +A_0) P \right)  + 2\pi \sqrt{-1} {\rm Re} {\rm Tr} \left({}^t \overline{\left(N {}^t \overline{T} +A_0 \right)}  B_0\right) \Big) \notag \\
&=\sum_{M\in {\rm Mat}(g,h;\mathfrak{O}_K) {}^t \overline{T}} \exp\Big( \pi\sqrt{-1} {\rm Tr} \left({}^t \overline{\left(M+A_0 \right)} W \left( M +A_0 \right) P\right) + 2\pi \sqrt{-1} {\rm Re} {\rm Tr}\left({}^t \overline{\left(M+A_0 \right)} B_0 \right) \Big).
\end{align}
On the other hand,
since $A\in G_1$, we have
\begin{align}\label{RHSp}
& \sum_{B\in G_2} \sum_{A\in G_1}
\exp\left( -2\pi \sqrt{-1} {\rm Re} {\rm Tr} \left({}^t \overline{A_0} \widehat{B} \right) \right) 
\cdot \Theta^P\begin{bmatrix} A_0+A \\ B_0+\widehat{B} \end{bmatrix} (W) \notag\\
&=\sum_{B\in G_2} \sum_{A\in G_1} \sum_{N\in {\rm Mat}(g,h;\mathfrak{O}_K) }
\exp\Big( \pi\sqrt{-1} {\rm Tr}\left({}^t\overline{(N+A_0+A)} W (N+A_0+A) P \right) \notag\\
&\hspace{5cm} +2\pi \sqrt{-1} {\rm Re} {\rm Tr} \left({}^t\overline{(N+A_0+A)} \left(B_0+\widehat{B} \right) \right)-2\pi \sqrt{-1} {\rm Re} {\rm Tr} \left({}^t \overline{A_0} \widehat{B} \right)  \Big) \notag \\
&=\sum_{B\in G_2}  
\hspace{3mm} \sum_{M\in {\rm Mat}(g,h;\mathfrak{O}_K) {}^t \overline{T} + {\rm Mat}(g,h;\mathfrak{O}_K)  } 
\exp\Big( \pi\sqrt{-1} {\rm Tr}\left( {}^t\overline{\left(M+A_0\right)} W \left(M+A_0 \right) P \right)  \notag\\
&\hspace{6.9cm}+ 2\pi \sqrt{-1} {\rm Re} {\rm Tr}\left({}^t \overline{\left(M+A_0 \right)} B_0 \right)+ 2\pi \sqrt{-1} {\rm Re} {\rm Tr} \left( {}^t \overline{M} \widehat{B} \right) \Big). 
\end{align}
Now, let $M\in {\rm Mat}(g,h;\mathfrak{O}_K) {}^t \overline{T} + {\rm Mat}(g,h;\mathfrak{O}_K) $.
Then,  
${\rm Re} {\rm Tr} \left( {}^t\overline{M} \widehat{B} \right)\in \mathbb{Z}$ for every $B\in G_2$
if and only if
$M\in  {\rm Mat}(g,h;\mathfrak{O}_K) {}^t \overline{T}$ (see Lemma \ref{LemRingInt} below).
We note that the correspondence
$$
B \mapsto \exp\left(2\pi \sqrt{-1} {\rm Re} {\rm Tr} \left({}^t \overline{M} \widehat{B}\right)\right)
$$
gives a character of  the finite abelian group $G_2$.
In fact,
letting $C=L T^{-1} \in {\rm Mat}(g,h;\mathfrak{O}_K)T^{-1} \cap {\rm Mat}(g,h;\mathfrak{O}_K)$ with $L\in {\rm Mat}(g,h;\mathfrak{O}_K)$,
we have
\begin{align*}
\exp\left( 2 \pi \sqrt{-1} {\rm Re} {\rm Tr}  \left( {}^t\overline{ \left( M_1 {}^t \overline{T}+ M_2 \right) } \widehat{C} \right) \right)
=\exp\left( 2 \pi \sqrt{-1} \left({\rm Re} {\rm Tr}  \left( {}^t\overline{  M_1 }  \widehat{L}\right)  + {\rm Re} {\rm Tr}  \left( {}^t\overline{  M_2 }  \widehat{C}\right)  \right)\right) =1\cdot 1 =1
\end{align*}
for any $M_1,M_2 \in{\rm Mat}(g,h;\mathfrak{O}_K).$
Therefore, it holds
\begin{align}\label{SCh}
\sum_{B\in G_2} \exp\left(2\pi \sqrt{-1} {\rm Re} {\rm Tr} \left(M \widehat{B}\right)\right) =
\begin{cases}
 {}^\sharp G_2  \quad &(M\in {\rm Mat}(g,h;\mathfrak{O}_K) {}^t \overline{T}),\\
 \hspace{2mm} 0 \quad  &(\text{otherwise}).
\end{cases}
\end{align}
Hence, by (\ref{RHSp}) and (\ref{SCh}), we obtain
\begin{align}\label{RHS}
&\sum_{B\in G_2} \sum_{A\in G_1} \exp\left( -2\pi \sqrt{-1} {\rm Re}{\rm Tr} ({}^t\overline{A_0} \widehat{B}) \right) \Theta^P \begin{bmatrix} A_0+A \\ B_0+\widehat{B} \end{bmatrix} (W) \notag\\
&=  {}^\sharp G_2 \sum_{M\in {\rm Mat}(g,h;\mathfrak{O}_K) {}^t \overline{T}}
 \exp\left( \pi\sqrt{-1} {\rm Tr} \left({}^t \overline{\left(M+A_0 \right)} W \left( M +A_0 \right) P\right)
 + 2\pi \sqrt{-1} {\rm Re} {\rm Tr}\left({}^t \overline{\left(M+A_0 \right)} B_0 \right) \right).
\end{align}
By comparing (\ref{LHS}) with (\ref{RHS}),
we have (\ref{RiemannRel}).
\end{proof}

\begin{lem}\label{LemRingInt}
Suppose that $M\in {\rm Mat}(g,h; K)  $. 
Then,
${\rm Re} {\rm Tr} \left( {}^t\overline{M} \widehat{B} \right)\in \mathbb{Z}$ for every $B\in {\rm Mat}(g,h;\mathfrak{O}_K) T^{-1}$
if and only if
$M\in  {\rm Mat}(g,h;\mathfrak{O}_K) {}^t \overline{T}$.
 \end{lem}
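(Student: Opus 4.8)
The plan is to prove the two implications by first stripping away the matrix $T$ through a change of variables and then reducing the whole assertion to a purely local statement about the ring $\mathfrak{O}_K$ inside $K$. Since $T\in\mathrm{GL}_h(K)$, every $M\in\mathrm{Mat}(g,h;K)$ can be written uniquely as $M=\mu\,{}^t\overline{T}$ with $\mu:=M\,({}^t\overline{T})^{-1}\in\mathrm{Mat}(g,h;K)$, and a typical $B\in\mathrm{Mat}(g,h;\mathfrak{O}_K)T^{-1}$ is $B=LT^{-1}$ with $L\in\mathrm{Mat}(g,h;\mathfrak{O}_K)$. Because $\widehat{\cdot}$ is merely multiplication by $1$ or $2$, we have $\widehat{B}=\widehat{L}\,T^{-1}$, and using that the conjugate transpose of ${}^t\overline{T}$ is $T$ together with the cyclicity of the trace I would compute $\mathrm{Re}\,\mathrm{Tr}\big({}^t\overline{M}\,\widehat{B}\big)=\mathrm{Re}\,\mathrm{Tr}\big(T\,{}^t\overline{\mu}\,\widehat{L}\,T^{-1}\big)=\mathrm{Re}\,\mathrm{Tr}\big({}^t\overline{\mu}\,\widehat{L}\big)$. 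As $M\in\mathrm{Mat}(g,h;\mathfrak{O}_K)\,{}^t\overline{T}$ is equivalent to $\mu\in\mathrm{Mat}(g,h;\mathfrak{O}_K)$, the lemma becomes the $T$-free assertion that $\mathrm{Re}\,\mathrm{Tr}\big({}^t\overline{\mu}\,\widehat{L}\big)\in\mathbb{Z}$ for every $L\in\mathrm{Mat}(g,h;\mathfrak{O}_K)$ if and only if $\mu\in\mathrm{Mat}(g,h;\mathfrak{O}_K)$.

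Next I would reduce to a single matrix entry. Since $\mathrm{Tr}\big({}^t\overline{\mu}\,\widehat{L}\big)=\sum_{i,j}\overline{\mu_{ij}}\,\widehat{L_{ij}}$ splits as a sum over the $gh$ entries, and one may choose $L$ supported on any prescribed entry, the matrix statement is equivalent to the scalar statement: for $m\in K$, one has $\mathrm{Re}\big(\overline{m}\,\widehat{y}\big)\in\mathbb{Z}$ for all $y\in\mathfrak{O}_K$ if and only if $m\in\mathfrak{O}_K$. This is the heart of the lemma: it says exactly that $\mathfrak{O}_K$ is self-dual with respect to the $\mathbb{Z}$-bilinear form $\langle x,y\rangle:=\mathrm{Re}\big(\overline{x}\,\widehat{y}\big)$, the natural analogue of the (self-dual) form $\,{}^t x\,y\,$ on $\mathbb{Z}^g$ that underlies the classical relations.

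To settle this scalar statement I would pass to the $\mathbb{Z}$-basis $\{1,\delta\}$ of $\mathfrak{O}_K$ and form the Gram matrix $G=\big(\langle e_a,e_b\rangle\big)$ with $e_1=1,\ e_2=\delta$, evaluating its entries from $\mathrm{Re}(\delta)$, $\mathrm{Re}(\overline{\delta})$ and $|\delta|^2\in\mathbb{Z}$ according to the definition of $\widehat{\cdot}$ in the two cases $-d\not\equiv 1$ and $-d\equiv 1\ (\mathrm{mod}\ 4)$. The sufficiency direction is then immediate, because for $x,y\in\mathfrak{O}_K$ one checks $\langle x,y\rangle\in\mathbb{Z}$ directly, giving $\mathfrak{O}_K\subseteq\mathfrak{O}_K^{\vee}$; for the necessity direction I would write $m=u\cdot 1+v\cdot\delta$ with $u,v\in\mathbb{Q}$, impose $\langle m,1\rangle,\langle m,\delta\rangle\in\mathbb{Z}$, and solve the resulting linear system for $u,v$. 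I expect the genuine obstacle to be precisely the reverse inclusion $\mathfrak{O}_K^{\vee}\subseteq\mathfrak{O}_K$, i.e.\ confirming that $G$ has the correct determinant so that the dual lattice is exactly $\mathfrak{O}_K$ and not a strictly larger lattice commensurable with it; this is the single point at which the normalization $\widehat{\cdot}$ and the integrality $|\delta|^2\in\mathbb{Z}$ must be used in an essential way. Once the self-duality of $\mathfrak{O}_K$ under $\langle\,,\rangle$ is established, both inclusions follow, and unwinding the entrywise reduction and the change of variables by ${}^t\overline{T}$ recovers the lemma for arbitrary $T$ and arbitrary $g,h$.
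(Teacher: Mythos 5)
Your two reduction steps are correct, and they are in substance identical to the paper's own proof: the paper also strips $T$ away by taking $B=\ell E_{ab}T^{-1}$ and setting $S=M\,({}^t\overline{T})^{-1}$ (your $\mu$), and its identity (\ref{ReTrells}) is exactly your entrywise Frobenius-pairing computation. The genuine gap is in the one step you deferred, and it is fatal: the self-duality of $\mathfrak{O}_K$ with respect to $\langle x,y\rangle=\mathrm{Re}\left(\overline{x}\,\widehat{y}\right)$ \emph{fails for every} $d>1$. Carrying out your own plan: in the case $-d\not\equiv 1\ (\mathrm{mod}\ 4)$ the Gram matrix in the basis $\{1,\delta\}$ is $\mathrm{diag}(1,d)$, and in the case $-d\equiv 1\ (\mathrm{mod}\ 4)$ it is $\left(\begin{smallmatrix}2&1\\1&(1+d)/2\end{smallmatrix}\right)$; in both cases $\det G=d$, so the dual lattice is $\mathfrak{O}_K^{\vee}=\frac{1}{\sqrt{-d}}\mathfrak{O}_K$, which contains $\mathfrak{O}_K$ with index $d$ and equals it only for $K=\mathbb{Q}\left(\sqrt{-1}\right)$. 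Concretely, $m=\frac{1}{\sqrt{-d}}$ satisfies $\mathrm{Re}\left(\overline{m}\,\widehat{\ell}\right)=-y\in\mathbb{Z}$ for every $\ell=x+y\delta\in\mathfrak{O}_K$ in both congruence cases, yet $m\notin\mathfrak{O}_K$ unless $d=1$; so the linear system you propose to solve for $u,v$ does not force $u,v\in\mathbb{Z}$, and no rearrangement of your argument can close this, because the scalar statement itself is false.

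You should know that this is not a defect of your route relative to the paper's: taking $T=I_h$ and $M=\frac{1}{\sqrt{-d}}E_{11}$ gives a counterexample to the lemma exactly as printed whenever $d>1$, and the paper's proof commits precisely the leap you flagged — from (\ref{ReTrells}) it asserts ``$s_{ab}\in\mathfrak{O}_K$'', when the hypothesis only yields $s_{ab}\in\frac{1}{\sqrt{-d}}\mathfrak{O}_K$. The lemma (hence the dichotomy (\ref{SCh})) is valid for $K=\mathbb{Q}\left(\sqrt{-1}\right)$, where the form is the standard unimodular form on $\mathbb{Z}[i]\simeq\mathbb{Z}^2$, and the failure is substantive for other fields: for $K=\mathbb{Q}\left(\sqrt{-2}\right)$, $g=h=1$, $T=\sqrt{-2}$, $P=1$, one has $G_2\simeq\mathbb{Z}/2\mathbb{Z}$ generated by $\frac{\sqrt{-2}}{2}$, the character $B\mapsto\exp\left(2\pi\sqrt{-1}\,\mathrm{Re}\left(\overline{M}\widehat{B}\right)\right)$ is trivial for \emph{all} $M\in\mathfrak{O}_K$, and the conclusion (\ref{RiemannRel}) of Theorem \ref{ThmRiemannRel} is then false (its right-hand side becomes $\sum_{n\in\mathfrak{O}_K}e^{\pi\sqrt{-1}\,\overline{n}Wn}$ while the left-hand side is $\sum_{n\in\sqrt{-2}\,\mathfrak{O}_K}e^{\pi\sqrt{-1}\,\overline{n}Wn}$). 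The paper's Section 2 applications with $d\neq 1$ survive only because there $T^{-1}$ is always integral, so $G_2=\{0\}$ and the lemma is invoked vacuously (the one genuinely non-vacuous example, Matsumoto's relation, has $d=1$). The honest completion of your plan is therefore to replace ``self-dual'' by the computation $\mathfrak{O}_K^{\vee}=\frac{1}{\sqrt{-d}}\mathfrak{O}_K$ and to record the lemma only under a hypothesis making the two lattices agree, e.g.\ $K=\mathbb{Q}\left(\sqrt{-1}\right)$, or $T^{-1}\in\mathrm{Mat}(h,h;\mathfrak{O}_K)$ — or to renormalize $\widehat{B}$ by a factor $\sqrt{-d}$ so that the pairing becomes unimodular.
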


\begin{proof}
Suppose that ${\rm Re}{\rm Tr} \left({}^t\overline{M} \widehat{B} \right) \in \mathbb{Z}$ for all $B\in {\rm Mat}(g,h;\mathfrak{O}_K) T^{-1}$. 
Let us take $B= \ell E_{ab}  T^{-1}$,
where $\ell \in \mathfrak{O}_K$ and  $E_{ab}$ $(1\leq a \leq g, 1\leq b \leq h)$ is the matrix unit.
Setting
$S =M {}^t \overline{T}^{-1}=(s_{jk}) \in {\rm Mat} (g,h; K)$,
we have
\begin{align}\label{ReTrells}
{\rm Re} {\rm Tr} \left({}^t \overline{M}\widehat{B}\right) = 
\begin{cases}
 {\rm Re} (\ell\overline{s_{ab}}) \quad  &(-d \not\equiv 1 \hspace{1mm} (\text{mod }4)), \\
2{\rm Re} (\ell\overline{s_{ab}})  \quad  &(-d \equiv 1 \hspace{1mm} (\text{mod }4)). 
\end{cases}
\end{align}
By the assumption,
together with (\ref{ReTrells}), 
it follows that $s_{ab}\in \mathfrak{O}_K$ for all $a$ and $b$.
Hence, 
$M=S {}^t \overline{T} \in {\rm Mat} (g,h; \mathfrak{O}_K) {}^t \overline{T}$.

Conversely,
by setting $M=S{}^t\overline{T}$ 
and
$B=L  T^{-1}$ 
for $S, L\in {\rm Mat} (g,h ; \mathfrak{O}_K)$,
we obtain
\begin{align*}
{\rm Re}{\rm Tr}\left({}^t \overline{M} \widehat{B} \right) 
=\begin{cases}
{\rm Re} {\rm Tr} \left( {}^t \overline{S} L \right)  \quad &(-d \not\equiv 1 \hspace{1mm} (\text{mod }4)),\\
2 {\rm Re} {\rm Tr} \left( {}^t \overline{S} L \right) \quad  &(-d \equiv 1 \hspace{1mm} (\text{mod }4)).
\end{cases} 
\end{align*}
Since $\mathfrak{O}_K$  is a ring,
${\rm Re}{\rm Tr}\left({}^t \overline{M} \widehat{B} \right)$ is a rational integer.
\end{proof}

\begin{rem}
The groups $G_1$ and $G_2$ of the theorem are finite,
because every entry $t$ of $T\in {\rm Mat}(h,h;K)$ has an expression $t=\frac{\nu}{n}$ with $\nu \in \mathfrak{O}_K$ and $n\in \mathbb{Z}$.
This is why we assume that $T$ is a matrix over $K$.
\end{rem}

\subsection{Backgrounds}

For $a, b\in K^g$,
we have the holomorphic theta function
\begin{align}\label{theta1}
\Theta\begin{bmatrix} a \\ b  \end{bmatrix} (W)
=\sum_{n\in \mathfrak{O}_K^g} \exp \left(\pi \sqrt{-1}  \left({}^t \overline{(n+a)} W (n+a) \right) +2\pi \sqrt{-1} {\rm Re}  \left({}^t \overline{(n+a)} b \right)  \right)
\end{align}
on $\mathbb{H}_I^{(g)}$
associated with $K$.
By the definition, this function satisfies
\begin{align}\label{ThetaRed}
\Theta\begin{bmatrix} ua \\ ub \end{bmatrix} (W) = 
\Theta\begin{bmatrix} a+\nu \\ b \end{bmatrix} (W) =
\Theta\begin{bmatrix} a \\ b \end{bmatrix} (W),
\end{align}
for every $\nu \in \mathfrak{O}_K^g$  and  every unit $u \in \mathfrak{O}_K^\times$.

\begin{rem}\label{RemMatTheta}
The theta function $\Theta\begin{bmatrix} a \\ b \end{bmatrix} (W)$ of (\ref{theta1}) is essentially equal to the theta function defined in [Ma2] as
\begin{align}\label{thetaM}
\widecheck{\Theta}\begin{bmatrix} a \\ b \end{bmatrix} (W)
=
\begin{cases}
\sum_{n\in \mathfrak{O}_K^g} \exp \left(\pi \sqrt{-1}  \left({}^t \overline{(n+a)} W (n+a) \right) +2\pi \sqrt{-1} {\rm Re}  \left({}^t \overline{n} b \right)  \right) &\quad (-d\not\equiv 1  \hspace{1mm}({\rm mod } \hspace{1mm} 4)),\\
\sum_{n\in \mathfrak{O}_K^g} \exp \left(\pi \sqrt{-1}  \left({}^t \overline{(n+a)} 2W (n+a) \right) +4\pi \sqrt{-1} {\rm Re}  \left({}^t \overline{n} b \right)  \right) &\quad (-d\equiv 1 \hspace{1mm} ({\rm mod } \hspace{1mm}  4)).
\end{cases}
\end{align}
If $-d\not\equiv 1 \hspace{1mm} ({\rm mod } \hspace{1mm}  4)$ ($-d\equiv 1 \hspace{1mm} ({\rm mod } \hspace{1mm}  4)$, resp.),
the difference between
$\widecheck{\Theta}\begin{bmatrix} a \\ b \end{bmatrix}(W)$ and 
our theta function 
$\Theta\begin{bmatrix}a \\ b \end{bmatrix} (W)$ 
($\Theta\begin{bmatrix}a \\ 2b \end{bmatrix} (2W)$, resp.) 
is the factor $e^{2\pi \sqrt{-1} ab}$ ($e^{4\pi \sqrt{-1} ab}$, resp.).
\end{rem}

In the following, let us  see  the reason why we call the equation   (\ref{RiemannRel}) the  Riemann type relation while  retracing the origin.
Let $I_h$ be the identity matrix of size $h$.

\begin{itemize}

\item Let us start with the simplest case.
If $h=1$ and $P=I_1$,
then $\Theta^P\begin{bmatrix}A_0 \\ B_0 \end{bmatrix} (W)$ of (\ref{ThetaAB}) is the same as the theta function of (\ref{theta1}).

\item
If $P=I_h$, 
then $\Theta^P\begin{bmatrix}A_0 \\ B_0 \end{bmatrix} (W)$ has an expression of a product of $h$ theta functions of (\ref{theta1}).
In practice, 
if we suppose $A_0=(a_1,\ldots, a_h)$ and $B_0=(b_1,\ldots,b_h)$ with $a_j,b_j \in K^g$,
by the definition,
we have
\begin{align}\label{ProdTheta1}
&\Theta^{I_h} \begin{bmatrix} A_0 \\ B_0  \end{bmatrix} (W) 
=\sum_{n_1,\ldots,n_h\in \mathfrak{O}_K^g} \exp\left( \pi\sqrt{-1}\left( \sum_{j=1}^h {}^t\overline{(n_j+a_j) } W (n_j+a_j) 
\right)+ 2\pi \sqrt{-1} {\rm Re} \left(\sum_{j=1}^h {}^t\overline{(n_j+a_j) } b_j \right)  \right) \notag\\
&=\prod_{j=1}^h \sum_{n_j \in \mathfrak{O}_K^g} 
\exp\left(\pi \sqrt{-1} \left({}^t \overline{(n_j+a_j)} W (n_j+a_j)\right) 
+2\pi \sqrt{-1} {\rm Re} \left({}^t\overline{(n_j+a_j)} b_j \right)  \right) 
=\prod_{j=1}^h \Theta\begin{bmatrix} a_j \\ b_j \end{bmatrix} (W).
\end{align}
Similarly, 
if $P$ is a diagonal matrix $\text{diag}(\alpha_1,\ldots,\alpha_h) $ with positive rational entries $\alpha_j$, 
we  obtain
\begin{align}\label{ProdTheta2}
\Theta^{\text{diag}(\alpha_1,\ldots,\alpha_h)} \begin{bmatrix} A_0 \\ B_0  \end{bmatrix} (W)  =\prod_{j=1}^h \Theta\begin{bmatrix} a_j \\ b_j \end{bmatrix} (\alpha_j W).
\end{align}

\item Let $\Omega\in \mathfrak{S}_g$ and $z\in \mathbb{C}^g$.
Set
\begin{align}\label{RiemannTheta}
\vartheta\begin{bmatrix}a \\ b \end{bmatrix}(z,\Omega)
=\sum_{n\in \mathbb{Z}^g} \exp \left(\pi \sqrt{-1}  \left({}^t (n+a) \Omega (n+a) \right) +2\pi \sqrt{-1}   \left({}^t (n+a) (z+b) \right)  \right),
\end{align}
where
$a,b\in \mathbb{Q}^g$ (recall (\ref{RiemannOri})). 
This is  the well-known Riemann theta function with characteristics $a,b$.
The theta function (\ref{theta1}) is a natural counterpart of  (\ref{RiemannTheta}) for $z=0\in \mathbb{C}^g$.
One can define the theta functions $\vartheta^P\begin{bmatrix}A_0 \\ B_0 \end{bmatrix}(Z,\Omega)$
for any positive-definite  symmetric matrix $P\in {\rm Mat}(h,h)(\mathbb{R})$ and $Z\in {\rm Mat}(g,h;\mathbb{C})$. 
For $(T,P)$ and $Q$,
where $P$ and $Q$ are positive-definite symmetric matrices satisfying $Q={}^t T P T$ with $T\in {\rm GL}_h(\mathbb{Q})$,
one can obtain a non-trivial relation for $\vartheta^P$ and $\vartheta^Q$ (see \cite{Mu} Chap.II Sect.6).
In fact,
our Theorem \ref{ThmRiemannRel} is a counterpart of that 
to theta functions on $\mathbb{H}_I$.

\item 
When $g=1$,
the Riemann theta function (\ref{RiemannTheta}) is equal to the classical Jacobi theta function (\ref{JacobiTheta}).
In this simple case,
the pair $(T_0, I_4)$ in the above sense
induces  quartic relations of Jacobi theta functions,
where $T_0$ is the orthogonal matrix of (\ref{TOri}).
They coincide with  the classical Riemann theta relations  mentioned in Introduction, like (\ref{RiemannOri}).

\end{itemize}

\subsection{ $\Theta^P$ for $P$ over $\mathbb{Q}$ }

If $P\in {\rm Mat}(h,h ; \mathbb{Q})$ is a positive-definite symmetric matrix,
the theta function $\Theta^P \begin{bmatrix}A_0 \\ B_0 \end{bmatrix}(W)$ of (\ref{ThetaAB}) has an expression in terms of $\Theta\begin{bmatrix} a \\ b\end{bmatrix}(W)$ of (\ref{theta1}) as the following theorem.
This result gives a natural extension of the facts (\ref{ProdTheta1}) and (\ref{ProdTheta2}).

\begin{thm}\label{ThmQ}
Let $K$ be an arbitrary imaginary quadratic field.
Suppose $P \in {\rm Mat}(h,h; \mathbb{Q})$ be a positive-definite symmetric matrix.  
Let $A_0,B_0 \in {\rm Mat}(g,h;K)$.
Then, 
there exist positive rational numbers $\lambda_1,\ldots,\lambda_h$ 
such that
$\Theta^P \begin{bmatrix} A_0 \\ B_0 \end{bmatrix} (W)$ of (\ref{ThetaAB}) associated with $K$
is expressed by a homogeneous polynomial in 
$\Theta\begin{bmatrix} a \\ b \end{bmatrix} (\lambda_j W)$ ($j\in\{1,\ldots, h\}$)
 of degree $h$ over $\mathbb{Q}$.
Here, $\Theta\begin{bmatrix} a \\ b \end{bmatrix} (W)$ is the theta function of (\ref{theta1}).
\end{thm}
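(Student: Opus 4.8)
The plan is to reduce the assertion to the diagonal case already recorded in (\ref{ProdTheta2}), using Theorem \ref{ThmRiemannRel} to pass from a general $P$ to a diagonal matrix. First I would diagonalize $P$ over $\mathbb{Q}$. Since $P$ is a positive-definite symmetric matrix with rational entries, Lagrange's reduction of quadratic forms (equivalently, congruence diagonalization over the field $\mathbb{Q}$, which is always possible in characteristic $0$) produces a matrix $T\in{\rm GL}_h(\mathbb{Q})$ and a diagonal matrix $D=\mathrm{diag}(\lambda_1,\dots,\lambda_h)$ with $\lambda_1,\dots,\lambda_h\in\mathbb{Q}_{>0}$ such that $P={}^tT D T$. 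As the entries of $T$ are rational we have ${}^t\overline{T}={}^tT$, so $D$ and $P$ stand in exactly the relation required by Theorem \ref{ThmRiemannRel}: in the notation of that theorem we take its matrix $P$ to be $D$, so that its matrix $Q={}^t\overline{T}PT$ becomes ${}^t\overline{T}DT={}^tTDT=P$ (our given matrix). Hypotheses (i) and (ii) hold because $T\in{\rm GL}_h(K)$ and $D$ is a (diagonal, hence Hermitian) positive-definite matrix. The numbers $\lambda_j$ produced here are the $\lambda_j$ asserted in the statement.

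Next I would apply Theorem \ref{ThmRiemannRel} in this situation with the substituted characteristics $A_0':=A_0\,{}^tT$ and $B_0':=B_0\,T^{-1}$, which are chosen precisely so that the left-hand side of (\ref{RiemannRel}) becomes $\Theta^P\begin{bmatrix}A_0\\B_0\end{bmatrix}(W)$. The relation (\ref{RiemannRel}) then reads
\[
\Theta^P\begin{bmatrix}A_0\\B_0\end{bmatrix}(W)=\frac{1}{{}^\sharp G_2}\sum_{B\in G_2}\sum_{A\in G_1}\exp\!\left(-2\pi\sqrt{-1}\,{\rm Re}\,{\rm Tr}\left({}^t\overline{A_0'}\,\widehat{B}\right)\right)\Theta^{D}\begin{bmatrix}A_0'+A\\B_0'+\widehat{B}\end{bmatrix}(W).
\]
Every summand now carries the diagonal matrix $D$ in its superscript, so by the product formula (\ref{ProdTheta2}) it factors as $\prod_{j=1}^h\Theta\begin{bmatrix}a_j\\b_j\end{bmatrix}(\lambda_jW)$, where $a_j,b_j\in K^g$ are the $j$-th columns of $A_0'+A$ and $B_0'+\widehat{B}$. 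Since $G_1$ and $G_2$ are finite and the characteristics involved lie in $\tfrac{1}{n}\mathfrak{O}_K^g$ for a common denominator $n$, only finitely many distinct products occur. Thus the right-hand side is a finite linear combination of products of exactly $h$ theta functions of type (\ref{theta1}) evaluated at the points $\lambda_jW$, that is, a homogeneous polynomial of degree $h$ in the functions $\Theta\begin{bmatrix}a\\b\end{bmatrix}(\lambda_jW)$.

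The main obstacle will be to verify that this polynomial can be taken with coefficients in $\mathbb{Q}$. A priori each coefficient is $\tfrac{1}{{}^\sharp G_2}$ times the root of unity $\exp(-2\pi\sqrt{-1}\,{\rm Re}\,{\rm Tr}({}^t\overline{A_0'}\widehat{B}))$, whose exponent is rational because $A_0'\in{\rm Mat}(g,h;K)$ and $\widehat{B}$ has entries in $K$. To reach $\mathbb{Q}$ I would absorb these phases into the theta factors themselves: by (\ref{theta1}) a shift $b\mapsto b+\mu$ of a lower characteristic by a suitable $\mu$ that is integral against $\mathfrak{O}_K^g$ multiplies $\Theta\begin{bmatrix}a\\b\end{bmatrix}(\lambda W)$ by a phase of exactly this type while keeping it a theta function of the same shape (\ref{theta1}); meanwhile normalizing the upper characteristics modulo $\mathfrak{O}_K^g$ by (\ref{ThetaRed}) costs no phase at all. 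After redistributing the phases in this manner and collecting equal monomials, the surviving numerical coefficients become ${}^\sharp G_2^{-1}$ times integers, hence rational. Carrying out this redistribution uniformly over all $A\in G_1$ and $B\in G_2$ — so that each monomial is genuinely a product of $h$ functions $\Theta\begin{bmatrix}a\\b\end{bmatrix}(\lambda_jW)$ and every coefficient lands in $\mathbb{Q}$ — is the delicate bookkeeping at the heart of the argument, whereas the diagonalization and the product formula (\ref{ProdTheta2}) are the routine ingredients.
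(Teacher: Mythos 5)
Your main construction is sound and is in substance the paper's own proof, differently organized: the paper performs the rational congruence diagonalization one step at a time (its matrix $M$ with ${}^t\overline{M}PM=\left(\begin{smallmatrix}\lambda_1&0\\0&P_1\end{smallmatrix}\right)$ is a single Schur-complement step of the Lagrange reduction you invoke, followed by induction on $h$, with Theorem \ref{ThmRiemannRel} applied at each step), whereas you apply Theorem \ref{ThmRiemannRel} once with the full diagonalizing matrix and factor every summand at once by (\ref{ProdTheta2}). The one-shot variant is legitimate --- the theorem only needs $T\in{\rm GL}_h(K)$ and the diagonal $D$ positive-definite Hermitian --- and your substitution $A_0'=A_0\,{}^tT$, $B_0'=B_0T^{-1}$ does turn the left side of (\ref{RiemannRel}) into $\Theta^P\begin{bmatrix}A_0\\B_0\end{bmatrix}(W)$, with the $\lambda_j$ being the diagonal entries of $D$.

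The gap is in your final paragraph, on rationality of the coefficients. A dual shift $b\mapsto b+\mu$ with ${\rm Re}\left({}^t\overline{n}\mu\right)\in\mathbb{Z}$ for all $n\in\mathfrak{O}_K^g$ multiplies $\Theta\begin{bmatrix}a\\b\end{bmatrix}(\lambda W)$ by $\exp\left(2\pi\sqrt{-1}\,{\rm Re}\left({}^t\overline{a}\mu\right)\right)$, so the phases you can generate in the $j$-th factor form a finite group depending on $a_j$, and this group is \emph{trivial} when $a_j\in\mathfrak{O}_K^g$ (by (\ref{ThetaRed}) such $a_j$ reduce to $0$). Yet the coefficient phase $\exp\left(-2\pi\sqrt{-1}\,{\rm Re}\,{\rm Tr}\left({}^t\overline{A_0'}\widehat{B}\right)\right)$ can be a nontrivial root of unity on exactly such terms: whenever $T^{-1}$ is not integral (i.e.\ $G_2\neq\{0\}$, which happens already for $P=\left(\begin{smallmatrix}2&1\\1&2\end{smallmatrix}\right)$), Lemma \ref{LemRingInt} provides an integral $A_0'$ and a class $B\in G_2$ with ${\rm Re}\,{\rm Tr}\left({}^t\overline{A_0'}\widehat{B}\right)\notin\mathbb{Z}$; in the $A=0$ term all upper characteristics are then integral and no shift of lower characteristics can produce the required phase. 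So ``redistributing the phases'' as described cannot by itself rationalize every coefficient; one must instead group the terms whose $\widehat{B}$'s differ columnwise by dual-lattice elements and use character orthogonality so that the character sums collapse (or work in the normalization $\widecheck{\Theta}$ of Remark \ref{RemMatTheta}, where these phases are built into the function). In fairness, the paper is terse at the same point --- it simply asserts that Theorem \ref{ThmRiemannRel} yields a linear combination over $\mathbb{Q}$ --- so your proposal is more explicit than the published proof, but the absorption mechanism as you state it would fail on the terms with integral upper characteristics and needs to be repaired along the lines above.
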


\begin{proof}
We suppose that the positive-definite symmetric matrix $P$ is given by
$$
P=\begin{pmatrix}
\mu_1 & R \\
{}^t R & P_{1}
\end{pmatrix},
$$
where $\mu_1 \in \mathbb{Q}, R\in {\rm Mat}(1,h-1;\mathbb{Q})$ and $P_1\in {\rm Mat}(h-1,h-1; \mathbb{Q})$.
Since $P$ is positive definite,
it follows that $\mu_1>0$ and $P_1$ is also positive definite.
In fact, 
we have ${}^t \overline{x} P x>0$ for any non-zero $x\in \mathbb{R}^h$.
So, if we take $x={}^t (x_1,0,\ldots,0) $ ($x={}^t (0,x_2,\ldots,x_{h})$, resp.),
we can see that $\mu_1>0$ ($P_1$ is positive definite, resp.)
In particular, $P_1$ is a nonsingular matrix.
Set 
$$
M=\begin{pmatrix}
1& 0 \\ 
-P_1^{-1} {}^t R& I_{h-1}
\end{pmatrix}
$$
We remark $M\in {\rm Mat} (h,h; \mathbb{Q})$,
because $P$ is a matrix over $\mathbb{Q}$.
Then, it holds
\begin{align}\label{MPMS}
{}^t \overline{M} P M 
=\begin{pmatrix} \mu_1 -R P_1^{-1} {}^t R & 0 \\ 0 & P_1  \end{pmatrix} =: S.
\end{align}
Obviously, $\det(M)=1$ and $M$ is nonsingular.
Setting $K=M^{-1}$,  we have
\begin{align}\label{TSTP}
{}^t \overline{K} S K = P.
\end{align}
Put $\lambda_1=\mu_1 - RP_1^{-1} {}^t R \in \mathbb{Q}.$
Then, it holds $\det (S) = \lambda_1 \det(P_1) = \det (P)$, for $\det(K)=1$.
Since $P_1$ is positive definite,
we have $\lambda_1>0$.
Also, the symmetric matrix $S$ is positive definite.
Thus, especially according to (\ref{TSTP}),
the matrices $K,S$ and $P$ satisfy the condition of Theorem \ref{ThmRiemannRel}.
By virtue of that theorem,
we can obtain an expression of $\Theta^P \begin{bmatrix} A_0 \\ B_0\end{bmatrix} (W)$
given by a linear combination of  theta functions $\Theta^S \begin{bmatrix} A \\ B \end{bmatrix}(W)$ over $\mathbb{Q}$
for appropriate  $A,B \in {\rm Mat}(g,h;K)$.
From (\ref{MPMS}) and the definition (\ref{ThetaAB}) of our theta functions,
each $\Theta^S \begin{bmatrix} A \\ B \end{bmatrix}(W)$ is equal to a product of 
$\Theta\begin{bmatrix} a \\ b \end{bmatrix}(\lambda_1 W)$
and
$\Theta^{P_1} \begin{bmatrix}A_1 \\ B_1 \end{bmatrix}(W)$
for some $a,b\in K^g$ and $A_1,B_1\in {\rm Mat}(g,h-1;K)$.
From the above,
$\Theta^P\begin{bmatrix} A_0 \\ B_0 \end{bmatrix} (W)$ is given by a linear combination of 
$\Theta\begin{bmatrix}a \\ b \end{bmatrix}(\lambda_1 W)\cdot \Theta^{P_1}\begin{bmatrix} A_1 \\ B_1 \end{bmatrix}(W)$ over $\mathbb{Q}$.
Therefore, by mathematical induction,
we have the assertion.
\end{proof}

\begin{rem}
If we consider   cases for general positive-definite Hermitian matrices $P\in {\rm Mat}(h,h;K)$,
instead of    symmetric matrices over $\mathbb{Q}$,
the statement of the above theorem is not always true.
In fact, 
the counterpart of the relation (\ref{MPMS}) does not always hold 
if $P$ is not a matrix over $\mathbb{Q}$.
\end{rem}

The above theorem guarantees that
the theta function $\Theta^P\begin{bmatrix}A \\ B \end{bmatrix}(W)$ of (\ref{ThetaAB})
has a polynomial expression in terms of the ordinary theta functions of (\ref{theta1}),
if $P$ is a  positive-definite  matrix over $\mathbb{Q}$.
Therefore,
such a theta function $\Theta^P\begin{bmatrix}A \\ B \end{bmatrix}(W)$
is useful to obtain non-trivial relations for theta functions  on $\mathbb{H}_I^{(g)}$.

\section{Examples of applications}
Recall that
the classical Riemann theta relation  (\ref{RiemannOri})  can be derived from the combinatorial properties associated with the particular matrix $T_0$ of (\ref{TOri}).
Also, 
by using appropriate matrices $T$ instead of $T_0$,
we can systematically obtain the  Riemann type relations such as [b] and [c] of Introduction.
In the author's opinion,
the most interesting aspect of Theorem \ref{ThmRiemannRel} is the fact that
the theorem enables us to obtain explicit algebraic theta relations by just taking suitable pairs $(T,P)$.
In this section,
we will see how to apply our main theorem to obtain explicit  relations
for the theta functions on $\mathbb{H}_I^{(g)}$
by virtue of appropriate $(T,P)$.

Matsumoto \cite{Ma} obtains non-trivial algebraic relations of theta functions on $\mathbb{H}_I^{(g)}$
for the quadratic fields $K=\mathbb{Q}\left( \sqrt{-1} \right), \mathbb{Q}\left( \sqrt{-2} \right)$ and $\mathbb{Q}\left( \sqrt{-3} \right)$.
He gives formulas in the form
\begin{align}\label{ThetaMat}
\left(\text{a monomial in } \widecheck{\Theta}\begin{bmatrix} a_j \\ b_j \end{bmatrix} (W)\right) = \left(\text{a polynomial in } \widecheck{\Theta}\begin{bmatrix} a_k' \\ b_k' \end{bmatrix} (W)\right).
\end{align}
In fact, 
our main theorem helps us to comprehend the results of \cite{Ma}. 
Namely, by applying Theorem \ref{ThmRiemannRel} to an appropriately chosen unitary matrix $T$ and $P=I_h$,
together with (\ref{ProdTheta2}),
we can acquire each result of \cite{Ma}.
For example, 
let $\widecheck{\Theta}\begin{bmatrix}a \\ b \end{bmatrix}(W)$ be the theta function of (\ref{thetaM}) associated with $K=\mathbb{Q}\left(\sqrt{-1}\right)$.
Then, Theorem 1 of \cite{Ma} is stated as
\begin{align}\label{MatRel}
&2^g \widecheck{\Theta}\begin{bmatrix} a_1 +a_2 \\ b_1+b_2 \end{bmatrix} (W) \cdot \widecheck{\Theta}\begin{bmatrix} a_1 -a_2 \\ b_1-b_2 \end{bmatrix} (W)\notag\\
&=\sum_{e,f\in (\frac{1}{1+\sqrt{-1}}\mathfrak{O}_K/\mathfrak{O}_K)^g} 
\exp\left(2\pi \sqrt{-1} {\rm Re} \left(\left(1+\sqrt{-1} \right) {}^t(b_1+b_2) \overline{f}\right)\right)\notag\\
&\hspace{5cm}
\cdot\widecheck{\Theta}\begin{bmatrix} e+(1+\sqrt{-1})a_1  \\ f +(1+\sqrt{-1}) b_1 \end{bmatrix}(W) \cdot \widecheck{\Theta}\begin{bmatrix} e+(1+\sqrt{-1})a_2 \\ f +(1+\sqrt{-1}) b_2\end{bmatrix}(W), 
\end{align}
where $a_1,a_2,b_1,b_2 \in K^g$.
Now, let us consider the matrices $T,P$ and $Q$ given by
$$T=\frac{1-\sqrt{-1}}{2} \begin{pmatrix} 1 & 1 \\ 1 & -1 \end{pmatrix},
\quad
P=Q=\begin{pmatrix}1 & 0 \\ 0 & 1 \end{pmatrix}.
$$
Also, we set
$A_0=(a_1+a_2,a_1-a_2)$ and $B_0=(b_1+b_2,b_1-b_2)$. 
Now, $T$ is a unitary matrix:
${}^t\overline{T}=T^{-1}=\frac{1+\sqrt{-1}}{2} \begin{pmatrix} 1 & 1 \\ 1 & -1 \end{pmatrix}.$
Then, we can see that
$G_1=G_2=\left(\frac{1}{1+\sqrt{-1}}\mathfrak{O}_K/\mathfrak{O}_K\right)^g.$
Regarding the correspondence between $\widecheck{\Theta}$ and $\Theta$  stated in Remark \ref{RemMatTheta},
we can see that our Theorem \ref{ThmRiemannRel} derives (\ref{MatRel}).

In this section, 
in order to demonstrate availabilities of our Theorem \ref{ThmRiemannRel},
we will obtain  theta relations which are different from (\ref{ThetaMat}).
In particular, the matrices $T$ need not to be  unitary matrices.
This section is organized as follows.

\begin{itemize}

\item
In Subsection 2.1,
we generalize the Riemann type relations suggested in [b], which induce  ``half formulas'' for the Jacobi or Riemann theta functions.
The results obtained in this subsection are valid for any imaginary quadratic fields $K$.

\item
The Cartan matrices of root systems are positive-definite matrices which play significant roles in various areas in mathematics.
In Subsection 2.2,
we give a simple expression of the theta function $\Theta^{A_h}$
defined by the use of the Cartan matrix $A_h$ of rank $h$ of the root system of type $A$.
The results obtained in this subsection are valid for any imaginary quadratic fields $K$ also.

\end{itemize}

We remark that
$\mathbb{Q}\left(\sqrt{-3}\right)$ ($\mathbb{Q}\left(\sqrt{-1}\right)$, resp.) is the imaginary quadratic field
with the smallest (second smallest, resp.) discriminant.
These imaginary quadratic fields are characterized by the property that those rings of integers have non-trivial units.
Furthermore,
theta functions (\ref{theta1}) for these fields are closely related to the moduli spaces of the concrete families of $K3$ surfaces as in Table 1 in Introduction.
Thus, theta functions for $\mathbb{Q}\left(\sqrt{-3}\right)$ and $\mathbb{Q}\left(\sqrt{-1}\right)$ are supported by solid motivations to study. 

\begin{itemize}

\item In Subsection 2.3, we give examples of cubic relations of theta functions for $K=\mathbb{Q}\left(\sqrt{-3}\right)$.

\item In Subsection 2.4, we give examples of quartic relations of theta functions for $K=\mathbb{Q}\left(\sqrt{-1}\right)$.

\end{itemize}

In this section,
in order to make the expressions of our results simple,
we will study formulas for the cases  $B_0=O$.
It should be reasonable for us to focus on such cases.
It is known that
many practically useful relations are given in terms of  theta functions for $B_0=O$,
like (\ref{RiemannQuad}), (\ref{Jacobihalf-angle}) or (\ref{doubleangle}) below.
Also, 
we are able to know how to apply Theorem \ref{ThmRiemannRel} by considering such cases.

\subsection{Particular quadratic theta relations}

For the Riemann theta function of (\ref{RiemannTheta}),
we set $\vartheta\begin{bmatrix}a \\ b \end{bmatrix}(\Omega)=\vartheta\begin{bmatrix}a \\ b \end{bmatrix}(0,\Omega)$ for $\Omega \in \mathfrak{S}_g$.
It is known that
they satisfy a quadratic equation
\begin{align}\label{RiemannQuad}
\vartheta\begin{bmatrix} a_1 \\ 0 \end{bmatrix} (\Omega )
\cdot
\vartheta\begin{bmatrix} a_2 \\ 0 \end{bmatrix} (\Omega )
=
\sum_{d\in (\mathbb{Z}/2\mathbb{Z})^g} 
\vartheta\begin{bmatrix} \frac{1}{2}(d+a_1+a_2) \\ 0 \end{bmatrix} (2\Omega )
\cdot
\vartheta\begin{bmatrix} \frac{1}{2}(d+a_1-a_2) \\ 0 \end{bmatrix} (2\Omega )
\end{align}
(see \cite{Mu} Chap.II Sect.6).
There are various interesting  applications of (\ref{RiemannQuad}).
For example, let us consider the  case of $g=1$ and let $\tau \in \mathbb{H}$ stands for $\Omega$.
Then, $\vartheta\begin{bmatrix}0\\0 \end{bmatrix}(\tau)$ ($\vartheta\begin{bmatrix}\frac{1}{2}\\0 \end{bmatrix}(\tau)$, resp.)  
is traditionally denoted by $\vartheta_{00}(\tau)$ ($\vartheta_{10}(\tau)$, resp.).
The relation (\ref{RiemannQuad}) derives ``half  formulas''
\begin{align}\label{Jacobihalf-angle}
 \vartheta_{00}^2 (\tau) = \vartheta_{00}^2 (2\tau) + \vartheta_{10}^2(2\tau), 
 \quad
  \vartheta_{10}^2 (\tau) = 2\vartheta_{00} (2\tau)  \vartheta_{10}(2\tau).
\end{align}
They are useful.
For instance, by  transformation formulas for $\tau\mapsto -\frac{1}{\tau}$ in \cite{Mu} Table V in Chap.I Sect.9,
the above formulas induce ``double formulas''
\begin{align}\label{doubleangle}
 \vartheta_{00}^2 (2\tau) = \frac{1}{2}(\vartheta_{00}^2 (\tau) + \vartheta_{01}^2(\tau) ), 
 \quad
  \vartheta_{01}^2 (2\tau) = \vartheta_{00} (\tau)  \vartheta_{01}(\tau). 
\end{align}
We remark that 
the formulas (\ref{doubleangle}) are very important to understand Gauss's approach to the arithmetic-geometric means and elliptic integrals (for example, see \cite{C}).

In this subsection,
we will give natural extensions of the above useful results for the Riemann theta functions.

We can regard the following proposition as a natural extension of (\ref{RiemannQuad}).

\begin{prop}\label{Prop01}
For $a\in K^g$,
let $\Theta\begin{bmatrix} a \\ 0 \end{bmatrix}(W) $ be the theta function (\ref{theta1}) associated with $K$.
Let $\alpha_1,\alpha_2\in K^g$.
Then, 
one has
\begin{align*}
&\Theta\begin{bmatrix} \alpha_1 \\ 0 \end{bmatrix} (|\delta|^2W)
\cdot \Theta\begin{bmatrix} \alpha_2 \\ 0 \end{bmatrix} (W) \notag\\
&=\sum_{\scriptsize{\begin{matrix} u\in (\mathbb{Z}/2|\delta|^2 \mathbb{Z})^g \\ v\in (\mathbb{Z}/2\mathbb{Z})^g \\ w\in (\mathbb{Z}/|\delta|^2\mathbb{Z})^g \end{matrix}}  }
\Theta\begin{bmatrix} \frac{1}{2\delta }\left(\alpha_1 \delta+\alpha_2+ u + v \delta+ 2w\right) \\0 \end{bmatrix}(2|\delta|^2W) 
\cdot
\Theta\begin{bmatrix} \frac{1}{2 \delta}\left(\alpha_1 \delta-\alpha_2+ u + v\delta \right) \\0 \end{bmatrix}(2|\delta|^2W) .
\end{align*}
\end{prop}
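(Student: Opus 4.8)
The plan is to realize Proposition \ref{Prop01} as the instance of Theorem \ref{ThmRiemannRel} attached to $h=2$, $B_0=O$, and the pair $(T,P)$ with
\[
P=2|\delta|^2 I_2,\qquad T=\frac{1}{2}\begin{pmatrix} 1 & \overline{\delta}^{-1}\\ 1 & -\overline{\delta}^{-1}\end{pmatrix}\in{\rm GL}_2(K),\qquad A_0=(\alpha_1,\alpha_2)\,{}^t\overline{T}.
\]
A short computation gives ${}^t\overline{T}\,T=\text{diag}\!\left(\tfrac12,\tfrac{1}{2|\delta|^2}\right)$, so that $Q:={}^t\overline{T}PT=\text{diag}(|\delta|^2,1)$ is positive-definite and $(T,P)$ satisfies hypotheses (i),(ii). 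With this $A_0$ one has $A_0\,{}^t\overline{T}^{-1}=(\alpha_1,\alpha_2)$ and $B_0T=O$, so the left-hand side of (\ref{RiemannRel}) is $\Theta^{Q}\begin{bmatrix}(\alpha_1,\alpha_2)\\ O\end{bmatrix}(W)$. Since $Q$ is diagonal, the product formula (\ref{ProdTheta2}) identifies it with $\Theta\begin{bmatrix}\alpha_1\\0\end{bmatrix}(|\delta|^2W)\cdot\Theta\begin{bmatrix}\alpha_2\\0\end{bmatrix}(W)$, the left-hand side of the proposition.

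For the right-hand side I would invoke (\ref{ProdTheta2}) a second time: because $P=2|\delta|^2I_2$ is again diagonal, every summand $\Theta^{P}\begin{bmatrix}A_0+A\\ \widehat{B}\end{bmatrix}(W)$ factors into a product of two theta functions of (\ref{theta1}) at the common argument $2|\delta|^2W$, whose upper characteristics are the two columns of $A_0+A$. The analytic content of this factorization is the lattice substitution underlying the proof of Theorem \ref{ThmRiemannRel}: writing the left-hand side as a double sum over $x\in\alpha_1+\mathfrak{O}_K^g$, $y\in\alpha_2+\mathfrak{O}_K^g$ and passing to $\xi=\tfrac{x}{2}+\tfrac{y}{2\delta}$, $\eta=\tfrac{x}{2}-\tfrac{y}{2\delta}$ (i.e. replacing $N$ by $N\,{}^t\overline{T}$), the identity ${}^t\overline{T}\,T=\text{diag}(\tfrac12,\tfrac1{2|\delta|^2})$ yields $|\delta|^2\,{}^t\overline{x}Wx+{}^t\overline{y}Wy=2|\delta|^2\big({}^t\overline{\xi}W\xi+{}^t\overline{\eta}W\eta\big)$. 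Thus the two quadratic forms of the left-hand side recombine into two copies of the argument $2|\delta|^2W$, with the shifts $\xi_0=\tfrac{1}{2\delta}(\alpha_1\delta+\alpha_2)$ and $\eta_0=\tfrac{1}{2\delta}(\alpha_1\delta-\alpha_2)$ coming from $A_0$; these are precisely the $u=v=w=0$ characteristics appearing in the statement.

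The remaining, and principal, step is the combinatorial bookkeeping, namely the explicit determination of the finite groups $G_1$ and $G_2$ attached to $T$, equivalently of a system of coset representatives of $\mathfrak{O}_K^g\times\mathfrak{O}_K^g$ inside the lattice produced by the substitution. I would parametrise these representatives by the triples $(u,v,w)$ in the stated ranges, check that each contributes the shift $\tfrac{1}{2\delta}(u+v\delta+2w)$ to the first characteristic and $\tfrac{1}{2\delta}(u+v\delta)$ to the second, and confirm—using $|\delta|^2\in\mathbb{Z}$ together with the reduction rules (\ref{ThetaRed})—that the ranges $u\in(\mathbb{Z}/2|\delta|^2\mathbb{Z})^g$, $v\in(\mathbb{Z}/2\mathbb{Z})^g$, $w\in(\mathbb{Z}/|\delta|^2\mathbb{Z})^g$ are exactly those for which the two upper characteristics are well defined modulo $\mathfrak{O}_K^g$. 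Finally, since $B_0=O$, I would show by Lemma \ref{LemRingInt} and Remark \ref{RemMatTheta} that the lower characteristics $\widehat{B}$ together with the phase $\exp\!\big(-2\pi\sqrt{-1}\,{\rm Re}\,{\rm Tr}({}^t\overline{A_0}\widehat{B})\big)$ collapse to the trivial lower characteristic $0$ recorded in the statement.

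The main obstacle will be exactly this index computation: pinning down the orders of $G_1$ and $G_2$ and a clean system of representatives, and matching it term-by-term with the triple $(u,v,w)$ parametrisation, is where the genuine work lies. Once the correspondence between the coset representatives and the ranges of $(u,v,w)$ is established and the lower characteristics are shown to vanish, the two applications of (\ref{ProdTheta2}) make the rest of the argument purely formal, and the proposition is the specialisation of (\ref{RiemannRel}) generalising the classical quadratic relation (\ref{RiemannQuad}).
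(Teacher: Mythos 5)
Your setup coincides exactly with the paper's own proof: your
$T=\frac12\begin{pmatrix}1&\overline{\delta}^{-1}\\ 1&-\overline{\delta}^{-1}\end{pmatrix}$
is literally the paper's $T=\frac{1}{2\overline{\delta}}\begin{pmatrix}\overline{\delta}&1\\ \overline{\delta}&-1\end{pmatrix}$, and your $P=2|\delta|^2I_2$, $Q={}^t\overline{T}PT=\mathrm{diag}(|\delta|^2,1)$, $A_0=(\alpha_1,\alpha_2)\,{}^t\overline{T}=\left(\frac{\alpha_1\delta+\alpha_2}{2\delta},\frac{\alpha_1\delta-\alpha_2}{2\delta}\right)$ and the two invocations of (\ref{ProdTheta2}) are precisely how Proposition \ref{Prop01} is derived from Theorem \ref{ThmRiemannRel}. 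However, what you submit is a plan that defers the two steps which constitute the actual proof, and your stated strategy for one of them is misdirected. The sum over $B\in G_2$ together with its phase factor does not need to be shown to ``collapse'' via Lemma \ref{LemRingInt} and Remark \ref{RemMatTheta} --- the Remark, which compares $\Theta$ with Matsumoto's $\widecheck{\Theta}$, plays no role here, and the Lemma is an ingredient of the theorem's proof, not of its application. The correct observation is simply that $T^{-1}=\begin{pmatrix}1&1\\ \overline{\delta}&-\overline{\delta}\end{pmatrix}\in{\rm Mat}(2,2;\mathfrak{O}_K)$, whence ${\rm Mat}(g,2;\mathfrak{O}_K)T^{-1}\subseteq{\rm Mat}(g,2;\mathfrak{O}_K)$ and $G_2=\{0\}$; then (\ref{RiemannRel}) specializes to $\Theta^{Q}\begin{bmatrix}A_0{}^t\overline{T}^{-1}\\ O\end{bmatrix}(W)=\sum_{A\in G_1}\Theta^{P}\begin{bmatrix}A_0+A\\ O\end{bmatrix}(W)$, with no lower characteristics and no phases appearing at all.

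The second deferred step --- the determination of $G_1$, which you yourself identify as ``where the genuine work lies'' --- is the substance of the proposition and must be carried out, though it is short. For $m_1,m_2\in\mathfrak{O}_K^g$ one has
\begin{align*}
(m_1,m_2)\,{}^t\overline{T}
=\left(\frac{m_1\delta+m_2}{2\delta},\ \frac{m_1\delta-m_2}{2\delta}\right)
=\left(\frac{r+2m_2}{2\delta},\ \frac{r}{2\delta}\right),
\qquad r=m_1\delta-m_2,
\end{align*}
and the pair $(r,m_2)$ ranges freely over $\mathfrak{O}_K^g\times\mathfrak{O}_K^g$. Writing $r=u+v\delta$ with $u,v\in\mathbb{Z}^g$ and noting that in both congruence cases for $-d$ one has $2\delta\mathfrak{O}_K=2\delta\mathbb{Z}+2|\delta|^2\mathbb{Z}$, the class of $\frac{r}{2\delta}$ modulo $\mathfrak{O}_K^g$ is parametrized by $u$ modulo $2|\delta|^2$ and $v$ modulo $2$; given $r$, the residual freedom $\frac{2m_2}{2\delta}=\frac{m_2}{\delta}$ modulo $\mathfrak{O}_K^g$ is exhausted by $m_2=w\in\mathbb{Z}^g$ modulo $|\delta|^2$, since $\mathbb{Z}\cap\delta\mathfrak{O}_K=|\delta|^2\mathbb{Z}$. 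This yields exactly the stated ranges $u\in(\mathbb{Z}/2|\delta|^2\mathbb{Z})^g$, $v\in(\mathbb{Z}/2\mathbb{Z})^g$, $w\in(\mathbb{Z}/|\delta|^2\mathbb{Z})^g$, and with the $A_0$ above the proposition follows. In short: your route is the paper's route and your executed computations ($Q$, $A_0$, both factorizations) are correct, but the pivotal index computation is left undone in the proposal, and the handling of the $B$-sum as written points at the wrong tools and would not go through as described.
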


\begin{proof}
We note that the values of the theta functions in the right hand side of the formula are independent of the choice of representatives of $u,v,w$.
Let us consider the matrices
$$
T=\frac{1}{2\overline{\delta}} \begin{pmatrix}\overline{\delta} & 1 \\  \overline{\delta} &-1\end{pmatrix},  
\quad
 P= \begin{pmatrix} 2|\delta|^2 & 0 \\ 0 & 2|\delta|^2 \end{pmatrix}, 
 \quad
  Q=\begin{pmatrix} |\delta|^2 & 0 \\ 0 & 1 \end{pmatrix}.$$
Then, we have $T^{-1} = \begin{pmatrix} 1& 1 \\ \overline{\delta}& -\overline{\delta}\end{pmatrix} \in {\rm Mat}(2,2;\mathfrak{O}_K)$.
Therefore,
due to the definition of the group $G_2$ of Theorem \ref{ThmRiemannRel},
it follows that $G_2$ is the unit group: $G_2=\{0\}.$
From Theorem \ref{ThmRiemannRel}, we obtain
\begin{align}\label{SumBZ}
\Theta^{Q}\begin{bmatrix} A_0 {}^t\overline{T}^{-1} \\ O \end{bmatrix}(W)
=\sum_{A\in G_1}  \Theta^{P}\begin{bmatrix} A_0 +A  \\ O \end{bmatrix}(W).
\end{align}
Now,
for $m_1,m_2\in \mathfrak{O}_K^g$,
we have
$$
(m_1,m_2) {}^t\overline{T} = \left( \frac{m_1\delta+m_2}{2\delta} ,\frac{m_1\delta -m_2}{2\delta} \right) = \left( \frac{r+2m_2}{2\delta} ,\frac{r}{2\delta} \right), 
$$
where 
$r=m_1 \delta -m_2$.
Since  $r$ and $m_2$ attain  arbitrary elements of  $\mathfrak{O}_K^g= \left(\mathbb{Z} + \delta \mathbb{Z}\right)^g$, we obtain
$$
G_1=\left\{ \left( \frac{u+v\delta + 2w}{2\delta}, \frac{u+v\delta}{2\delta} \right) \hspace{1mm} \bigg| \hspace{1mm} u\in (\mathbb{Z}/2|\delta|^2\mathbb{Z})^g, v\in (\mathbb{Z}/2\mathbb{Z})^g,  w\in (\mathbb{Z}/|\delta|\mathbb{Z})^g \right\}.
$$  
If $A_0{}^t \overline{T} ^{-1} =(\alpha_1,\alpha_2)$, then $A_0=\left( \frac{\alpha_1\delta+\alpha_2}{2\delta} ,\frac{\alpha_1\delta -\alpha_2}{2\delta} \right).$
Hence, by (\ref{SumBZ}), together with (\ref{ProdTheta1}) and (\ref{ProdTheta2}),
we have the assertion.
\end{proof}

The next proposition gives another extension of (\ref{RiemannQuad}).

\begin{prop}\label{Prop11}
One has
\begin{align*}
&\Theta\begin{bmatrix} \alpha_1 \\ 0 \end{bmatrix} (W)
\cdot \Theta\begin{bmatrix} \alpha_2 \\ 0 \end{bmatrix} (W) \notag\\
&=\sum_{\scriptsize{\begin{matrix} u\in (\mathbb{Z}/2|\delta|^2 \mathbb{Z})^g \\ v\in (\mathbb{Z}/2\mathbb{Z})^g \\ w\in (\mathbb{Z}/|\delta|^2 \mathbb{Z})^g \end{matrix}}  }
\Theta\begin{bmatrix} \frac{1}{2\delta }\left(\alpha_1+\alpha_2+ u + v\delta+ 2w\right) \\0 \end{bmatrix}(2|\delta|^2W) 
\cdot
\Theta\begin{bmatrix} \frac{1}{2\delta}\left(\alpha_1-\alpha_2+ u+ v\delta\right) \\0 \end{bmatrix}(2|\delta|^2 W) .
\end{align*}
\end{prop}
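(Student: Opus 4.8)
The plan is to mimic the proof of Proposition \ref{Prop01} almost verbatim, changing only the pair $(T,P)$. In Proposition \ref{Prop01} the left-hand side carries the argument $|\delta|^2 W$ in its first factor, which is exactly the effect of $Q=\mathrm{diag}(|\delta|^2,1)$ seen through (\ref{ProdTheta2}); here both factors have argument $W$, so I would instead arrange $Q=I_2$. Concretely I would apply Theorem \ref{ThmRiemannRel} with $h=2$, $B_0=O$, and
\[
T=\frac{1}{2\overline{\delta}}\begin{pmatrix}1 & 1 \\ 1 & -1\end{pmatrix},\qquad P=2|\delta|^2 I_2,\qquad Q=I_2.
\]

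First I would check the hypotheses of Theorem \ref{ThmRiemannRel}. A direct computation gives ${}^t\overline{T}=\frac{1}{2\delta}\begin{pmatrix}1&1\\1&-1\end{pmatrix}$, whence ${}^t\overline{T}PT=I_2=Q$; thus $P,Q$ are positive-definite Hermitian and $T\in\mathrm{GL}_2(K)$. Since $T^{-1}=\overline{\delta}\begin{pmatrix}1&1\\1&-1\end{pmatrix}\in\mathrm{Mat}(2,2;\mathfrak{O}_K)$, the lattice $\mathrm{Mat}(g,2;\mathfrak{O}_K)T^{-1}$ is contained in $\mathrm{Mat}(g,2;\mathfrak{O}_K)$, so $G_2=\{0\}$ and the $B$-sum in (\ref{RiemannRel}) collapses to a single term, exactly as in (\ref{SumBZ}).

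Next I would determine $G_1$. For $m_1,m_2\in\mathfrak{O}_K^g$ one has $(m_1,m_2){}^t\overline{T}=\bigl(\frac{m_1+m_2}{2\delta},\frac{m_1-m_2}{2\delta}\bigr)$; writing $r=m_1-m_2$, the pair $(r,m_2)$ ranges freely over $\mathfrak{O}_K^g\times\mathfrak{O}_K^g$ (this substitution is a genuine bijection, which makes the bookkeeping slightly cleaner than in Proposition \ref{Prop01}), and the element becomes $\bigl(\frac{r+2m_2}{2\delta},\frac{r}{2\delta}\bigr)$. Reducing modulo $\mathrm{Mat}(g,2;\mathfrak{O}_K)$, the second coordinate depends only on $r\bmod 2\delta\mathfrak{O}_K$ and, for fixed $r$, the first only on $m_2\bmod\delta\mathfrak{O}_K$. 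Using $|\delta|^2\in\mathbb{Z}$ together with $\delta^2=-|\delta|^2$ (resp. $\delta^2=\delta-|\delta|^2$) when $-d\not\equiv 1$ (resp. $-d\equiv 1$) mod $4$, I would identify $\mathfrak{O}_K/2\delta\mathfrak{O}_K\cong(\mathbb{Z}/2|\delta|^2\mathbb{Z})\times(\mathbb{Z}/2\mathbb{Z})$ via $r=u+v\delta$ and $\mathfrak{O}_K/\delta\mathfrak{O}_K\cong\mathbb{Z}/|\delta|^2\mathbb{Z}$ via $m_2=w$, producing precisely the index set of the proposition.

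Finally, setting $A_0{}^t\overline{T}^{-1}=(\alpha_1,\alpha_2)$ gives $A_0=\bigl(\frac{\alpha_1+\alpha_2}{2\delta},\frac{\alpha_1-\alpha_2}{2\delta}\bigr)$, so the left-hand side of (\ref{RiemannRel}) equals $\Theta\begin{bmatrix}\alpha_1\\0\end{bmatrix}(W)\,\Theta\begin{bmatrix}\alpha_2\\0\end{bmatrix}(W)$ by (\ref{ProdTheta1}), while each summand on the right factors through (\ref{ProdTheta2}) with $P=2|\delta|^2 I_2$ into the advertised product of two theta values at $2|\delta|^2 W$. The one genuinely delicate step is the determination of $G_1$: one must treat the two congruence cases for $\delta$ uniformly and verify that the proposed representatives are both non-redundant and exhaustive. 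Everything else is a transcription of the computation already performed for Proposition \ref{Prop01}.
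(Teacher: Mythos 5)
Your proposal is correct and coincides with the paper's own proof, which uses exactly the same data $T=\frac{1}{2\overline{\delta}}\begin{pmatrix}1&1\\1&-1\end{pmatrix}$, $P=2|\delta|^2 I_2$, $Q=I_2$ and then appeals to the argument of Proposition \ref{Prop01} with the same groups $G_1$, $G_2$. Your extra verifications (the quotient identifications $\mathfrak{O}_K/2\delta\mathfrak{O}_K\cong(\mathbb{Z}/2|\delta|^2\mathbb{Z})\times(\mathbb{Z}/2\mathbb{Z})$ and $\mathfrak{O}_K/\delta\mathfrak{O}_K\cong\mathbb{Z}/|\delta|^2\mathbb{Z}$ in both congruence cases, and the observation that $(m_1,m_2)\mapsto(r,m_2)$ is here a genuine bijection) merely make explicit what the paper leaves implicit.
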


\begin{proof}
Let us consider the matrices
$$
T=\frac{1}{2\overline{\delta}}\begin{pmatrix} 1 & 1 \\ 1 & -1 \end{pmatrix},  
\quad
P= \begin{pmatrix}2|\delta|^2  & 0 \\0 & 2|\delta|^2  \end{pmatrix},
\quad
Q=\begin{pmatrix} 1 & 0 \\ 0 &1 \end{pmatrix}.
$$ 
Then, we have $T^{-1}=\overline{\delta} \begin{pmatrix} 1 & 1 \\ 1 & -1 \end{pmatrix} \in {\rm Mat}(2,2;\mathfrak{O}_K)$.
By an argument similar to the proof of Proposition \ref{Prop01} (1), we have the formula in question.
In particular,
we can see that the finite groups $G_1$ and $G_2$ are the same as those of Proposition \ref{Prop01}.
\end{proof}

\subsection{Expressions of theta functions related with the root system $A_h$}

Let us denote the Cartan matrix of the root lattice of type $A_h$ by the letter expressing the type, i.e.,
$$
A_h
=\begin{pmatrix}
2& -1 & \cdots &0 &0\\
-1 & 2  &\cdots &0&0\\
\vdots & \vdots  & \ddots & \vdots&\vdots \\
0 &  0  & \cdots &2 &-1 \\
0 &  0  & \cdots &-1 &2 
\end{pmatrix}.
$$
In this subsection, we will give an expression of the theta function of (\ref{ThetaAB})
coming from the real positive-definite matrix $P=A_h$.

\begin{prop}
For $\alpha_1,\ldots,\alpha_h \in K^g$, 
let $\Theta^{A_h} 
\begin{bmatrix}
(\alpha_1,\alpha_2,\ldots,\alpha_h)\\
O
\end{bmatrix} 
(W)$
be
 the theta function (\ref{ThetaAB}) for $P=A_h$ associated with $K$.
Then,
one has the expression
\begin{align*}
&\Theta^{A_h} 
\begin{bmatrix}
(\alpha_1,\alpha_2,\ldots,\alpha_h)\\
O
\end{bmatrix} 
(W)\\
&=
\sum_{u_1,v_1} \cdots \sum_{u_h,v_h} 
\Theta
\begin{bmatrix}
 \left(\frac{\alpha_1+u_1 +  v_1 \delta}{h \delta} \right)\\
0
\end{bmatrix}
( (h+1) h |\delta|^2 W)
\cdot 
\Theta
\begin{bmatrix}
 \left(-\frac{\alpha_1+u_1 +  v_1 \delta}{h \delta} + \frac{\alpha_2 +u_2+ v_2 \delta}{(h-1)\delta} \right)\\
0
\end{bmatrix}
( h (h-1) |\delta|^2 W)\\
&\hspace{5cm}
\cdots
\Theta
\begin{bmatrix}
 \left(-\frac{\alpha_{h-1}+u_{h-1} +  v_{h-1} \delta }{2\delta} + \frac{\alpha_h +u_h}{\delta} \right)\\
0
\end{bmatrix}
(2|\delta|^2 W).
\end{align*}
Here, the summation $\sum_{u_1,v_1} \cdots \sum_{  u_h , v_h} $ extends over all $u_j\in (\mathbb{Z}/|\delta|^2(h+1-j) \mathbb{Z})^g$ and $v_j\in (\mathbb{Z}/(h+1-j) \mathbb{Z})^g$ for $j\in \{1,\ldots,h\}$.
\end{prop}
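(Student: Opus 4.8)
The plan is to realize the Cartan matrix $A_h$ as $Q={}^t\overline{T}PT$ for a single explicit bidiagonal $T\in{\rm GL}_h(K)$ and a real positive diagonal $P$, apply Theorem \ref{ThmRiemannRel} once, and then split the resulting $\Theta^{P}$ into a product of ordinary theta functions by (\ref{ProdTheta2}). Motivated by the tridiagonal (Gaussian elimination) structure of $A_h$, I would take the lower bidiagonal matrix $T$ with $T_{jj}=\frac{1}{\overline{\delta}(h+1-j)}$ and $T_{j+1,j}=-\frac{1}{\overline{\delta}(h+1-j)}$, together with $P={\rm diag}\big((h+1)h|\delta|^2,\,h(h-1)|\delta|^2,\ldots,2\cdot 1\,|\delta|^2\big)$, so that $P_{jj}=(h+2-j)(h+1-j)|\delta|^2$. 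A direct bidiagonal computation then gives ${}^t\overline{T}PT=A_h$: the $l$-th diagonal entry collects the contributions $\frac{h+2-l}{h+1-l}+\frac{h-l}{h+1-l}=2$ (the last being $\frac{2}{1}=2$ on its own), and each adjacent off-diagonal entry collapses to $-1$, while all further entries vanish because $T$ is bidiagonal. This verifies hypotheses (i) and (ii) with $Q=A_h$.

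Next I would compute the two groups. Writing $T=\frac{1}{\overline{\delta}}L$ with $L$ real lower bidiagonal, one checks that $L^{-1}$ is the integral matrix with $(L^{-1})_{ij}=(h+1-i)$ for $i\ge j$ and $0$ otherwise; hence $T^{-1}=\overline{\delta}L^{-1}\in{\rm Mat}(h,h;\mathfrak{O}_K)$, so $G_2$ is trivial, exactly as in the proof of Proposition \ref{Prop01}. Taking $A_0=(\alpha_1,\ldots,\alpha_h){}^t\overline{T}$ and $B_0=O$, the relation (\ref{RiemannRel}) therefore reduces to $\Theta^{A_h}\begin{bmatrix}(\alpha_1,\ldots,\alpha_h)\\O\end{bmatrix}(W)=\sum_{A\in G_1}\Theta^{P}\begin{bmatrix}A_0+A\\O\end{bmatrix}(W)$. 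For $G_1$, I would record that for $M=(m_1,\ldots,m_h)\in{\rm Mat}(g,h;\mathfrak{O}_K)$ the $k$-th column of $M{}^t\overline{T}$ equals $\beta_k-\beta_{k-1}$ with $\beta_k=\frac{m_k}{(h+1-k)\delta}$ and $\beta_0=0$. Thus $M{}^t\overline{T}\in{\rm Mat}(g,h;\mathfrak{O}_K)$ if and only if every $\beta_k\in\mathfrak{O}_K^g$, i.e. $m_k\in(h+1-k)\delta\,\mathfrak{O}_K^g$, which yields the coordinatewise identification $G_1\cong\prod_{j=1}^h\big(\mathfrak{O}_K/(h+1-j)\delta\,\mathfrak{O}_K\big)^g$.

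It then remains to exhibit explicit representatives and match characteristics. For $n=h+1-j$, a lattice-index computation in the basis $\{1,\delta\}$ (using $\delta^2\in\mathbb{Z}+\delta\mathbb{Z}$ and $|\delta|^2\in\mathbb{Z}$) shows that $\{u+v\delta : u\in\mathbb{Z}/n|\delta|^2\mathbb{Z},\ v\in\mathbb{Z}/n\mathbb{Z}\}$ is a complete residue system modulo $n\delta\,\mathfrak{O}_K$, of the correct order $n^2|\delta|^2$. Setting $m_j=u_j+v_j\delta$ and substituting into $\beta_j=\frac{\alpha_j+u_j+v_j\delta}{(h+1-j)\delta}$, the $j$-th factor produced by (\ref{ProdTheta2}) becomes $\Theta\begin{bmatrix}\beta_j-\beta_{j-1}\\0\end{bmatrix}\big((h+2-j)(h+1-j)|\delta|^2 W\big)$, which is exactly the $j$-th factor in the asserted identity; the $v_h$-term disappears since $n=1$ when $j=h$. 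Invariance of each factor under $\mathfrak{O}_K^g$-translations of its characteristic, via (\ref{ThetaRed}), guarantees that the sum is independent of the chosen representatives.

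The main obstacle is the bookkeeping in the second and third paragraphs: correctly identifying $G_1$ as the product of the cyclic groups $\mathfrak{O}_K/(h+1-j)\delta\,\mathfrak{O}_K$ and aligning its representatives with the telescoping characteristics $\beta_j-\beta_{j-1}$. Guessing and verifying the bidiagonal $T$ is short; the lattice-index argument fixing the ranges of $u_j,v_j$ and establishing the coordinatewise splitting of $G_1$ is where the genuine work lies.
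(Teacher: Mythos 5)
Your proposal is correct and follows essentially the same route as the paper's own proof: your bidiagonal $T$ is exactly the paper's $T=\frac{1}{\overline{\delta}}T_h$, your diagonal $P$ is its $|\delta|^2P_h$, and you likewise get $G_2=\{0\}$ from the integrality of $T^{-1}=\overline{\delta}S_h$ and compute $G_1$ from the telescoping columns of $M\,{}^t\overline{T}$. Your only divergence is cosmetic: you spell out the lattice-index argument showing $\{u+v\delta\}$ is a complete residue system modulo $(h+1-j)\delta\,\mathfrak{O}_K$ somewhat more explicitly than the paper, which merely invokes $\mathfrak{O}_K=\mathbb{Z}+\delta\mathbb{Z}$ and coprimality of consecutive integers.
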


\begin{proof}
Set
\begin{align*}
T_h=
\begin{pmatrix}
\frac{1}{h} & 0 & 0 & \cdots & & 0 &0\\
-\frac{1}{h} &\frac{1}{h-1} & 0 & \cdots & & 0 & 0 \\
0 & -\frac{1}{h-1} & \frac{1}{h-2} & \cdots & & 0 & 0\\
0 & 0 & -\frac{1}{h-2}& \ddots & & 0&0   \\
\vdots &\vdots& &  & & \vdots &\vdots \\
0& 0& 0 & \cdots & & \frac{1}{2} & 0 \\
0& 0& 0 & \cdots & & -\frac{1}{2} & 1 \\
\end{pmatrix},
\quad
S_h
=\begin{pmatrix}
h& 0 & 0&\cdots &0&0 \\
h-1& h-1 &0 & \cdots &0&0\\
h-2& h-2 & h-2 & \cdots &0&0\\
\vdots & \vdots & \vdots &\ddots &\vdots& \vdots\\
2&2&2& \cdots &2 &0\\
1&1&1 & \cdots &1 &1
\end{pmatrix}
\end{align*}
and
\begin{align*}
P_h=
\begin{pmatrix}
(h+1) h& 0& \cdots &0\\
0& h(h-1) & \cdots &0 \\
\vdots& \vdots& \ddots&0 \\
0 & 0 & \cdots & 2\cdot1
\end{pmatrix}.
\end{align*}
Let us study the matrices
$$T=\frac{1}{\overline{\delta}} T_h, 
\quad
P=|\delta|^2 P_h,
\quad
Q=A_h.$$
Then, we have
$T^{-1}=\overline{\delta} S_h \in {\rm Mat}(2,2;\mathfrak{O}_K) $.
So,  the group $G_2$ of Theorem \ref{ThmRiemannRel} is equal to $\{0\}$.
Also, if we take $(m_1,\ldots, m_h) \in {\rm Mat}(g,h;\mathfrak{O}_K)$,
we have
\begin{align*}
(m_1,m_2,\ldots,m_h) {}^t \overline{T}=\frac{1}{\delta}\left( \frac{m_1}{h}, -\frac{m_1}{h} + \frac{m_2}{h-1}, -\frac{m_2}{h-1}+\frac{m_3}{h-2},\ldots, -\frac{m_{h-1}}{2}+m_h \right).
\end{align*}
Recall that $\mathfrak{O}_K = \mathbb{Z} + \delta \mathbb{Z}$.
Considering the fact that integers $k$ and $k+1$ are coprime if $k\geq 2$,
the group $G_1$ of Theorem \ref{ThmRiemannRel} is given by 
\begin{align*}
G_1&=\bigg\{\frac{1}{\delta}\left( \frac{u_1+ v_1 \delta}{h},-\frac{u_1+ v_1 \delta }{h} + \frac{u_2+ v_2 \delta}{h-1},\ldots, -\frac{u_{h-1}+ v_{h-1} \delta}{2}+u_h  \right) \\
&\hspace{5cm} \Big| \hspace{1mm}  u_j\in \mathbb{Z}/(|\delta|^2 (h+1-j) \mathbb{Z})^g, v_j \in (\mathbb{Z}/(h+1-j)\mathbb{Z})^g  \bigg\}.
\end{align*}
Moreover, if we set
$A_0 {}^t \overline{T}^{-1} =(\alpha_1,\alpha_2,\ldots,\alpha_h)$,
then
$A_0= \frac{1}{\delta}\left( \frac{\alpha_1}{h}, - \frac{\alpha_1}{h}+\frac{\alpha_2}{h-1},\ldots,-\frac{\alpha_{h-1}}{2}+\alpha_h \right) $.
Hence, we have the  equation in question.
\end{proof}

\subsection{Examples of cubic theta relations for $\mathbb{Q}\left(\sqrt{-3}\right)$}

In this subsection, we set $d=3$, $K=\mathbb{Q}\left(\sqrt{-3}\right)$, $\mathfrak{O}_K=\mathbb{Z}[\omega]$, where $\omega=\frac{-1 + \sqrt{-3}}{2}$.
Dern and Krieg  \cite{DK} study theta functions on $\mathbb{H}_I^{(2)}$ associated with the imaginary quadratic field $\mathbb{Q}\left(\sqrt{-3}\right)$.
Those theta functions are closely related to the period mapping for a family of lattice polarized $K3$ surfaces (see \cite{NS}).
Thus, theta functions (\ref{theta1}) associated with $K=\mathbb{Q}\left(\sqrt{-3}\right)$ are interesting objects.

\begin{prop}\label{PropR-3}
Let $K=\mathbb{Q}\left( \sqrt{-3} \right)$.
Take $\alpha_1,\alpha_2,\alpha_3 \in K^g$.
For $a\in K^g$,
let $\Theta\begin{bmatrix} a \\ 0 \end{bmatrix}(W) $ be the theta function (\ref{theta1}) associated with $K$.
Then, it holds
\begin{align}\label{Qubic-3}
&\Theta\begin{bmatrix} \alpha_1 \\ 0 \end{bmatrix} (W)\cdot
\Theta\begin{bmatrix} \alpha_2 \\ 0 \end{bmatrix} (W)\cdot
\Theta\begin{bmatrix} \alpha_3 \\ 0 \end{bmatrix} (W) \notag \\
&=
\sum_{
\scriptsize{
\begin{matrix} r \in (\mathfrak{O}_K/3\mathfrak{O}_K)^g \\ s\in (\sqrt{-3}\mathfrak{O}_K/3\mathfrak{O}_K)^g \end{matrix}}} 
\Theta\begin{bmatrix}    \frac{\alpha_1 + \omega^2 \alpha_2 + \omega \alpha_3 + r}{3} \\ 0 \end{bmatrix} (3W)
\cdot\Theta\begin{bmatrix}   \frac{\alpha_1 + \omega \alpha_2 + \omega^2 \alpha_3 + r+s }{3} \\ 0 \end{bmatrix} (3W)
\cdot
\Theta\begin{bmatrix}  \frac{\alpha_1 + \alpha_2 + \alpha_3 +r -s}{3}\\ 0 \end{bmatrix} (3W) .
\end{align}
\end{prop}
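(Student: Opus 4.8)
The plan is to specialize Theorem \ref{ThmRiemannRel} to $h=3$ using a discrete-Fourier matrix built from the primitive cube root of unity $\omega$, exactly as Propositions \ref{Prop01} and \ref{Prop11} specialize it to $2\times 2$ Fourier matrices. Concretely, I would take
\begin{align*}
T=\frac{1}{3}\begin{pmatrix} 1 & \omega & \omega^2 \\ 1 & \omega^2 & \omega \\ 1 & 1 & 1 \end{pmatrix},\qquad P=3I_3,\qquad Q=I_3,
\end{align*}
together with $B_0=O$ and $A_0=(\alpha_1,\alpha_2,\alpha_3)\,{}^t\overline{T}$, so that $A_0{}^t\overline{T}^{-1}=(\alpha_1,\alpha_2,\alpha_3)$. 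Using $\overline{\omega}=\omega^2$ and $1+\omega+\omega^2=0$, a direct computation gives ${}^t\overline{T}\,T=\frac{1}{3}I_3$; hence $Q={}^t\overline{T}(3I_3)T=I_3$ is positive-definite Hermitian and the pair $(T,P)$ satisfies hypotheses (i) and (ii).

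First I would check that both groups degenerate favorably. From ${}^t\overline{T}\,T=\frac{1}{3}I_3$ we get $T^{-1}=3\,{}^t\overline{T}=\begin{pmatrix} 1 & 1 & 1 \\ \omega^2 & \omega & 1 \\ \omega & \omega^2 & 1 \end{pmatrix}\in{\rm Mat}(3,3;\mathfrak{O}_K)$, so ${\rm Mat}(g,3;\mathfrak{O}_K)T^{-1}\subseteq{\rm Mat}(g,3;\mathfrak{O}_K)$ and $G_2=\{0\}$, killing the $B$-sum and the exponential prefactor. Theorem \ref{ThmRiemannRel} then collapses to
\begin{align*}
\Theta^{I_3}\begin{bmatrix}(\alpha_1,\alpha_2,\alpha_3)\\ O\end{bmatrix}(W)=\sum_{A\in G_1}\Theta^{3I_3}\begin{bmatrix}A_0+A\\ O\end{bmatrix}(W).
\end{align*}
By (\ref{ProdTheta2}) the left side is $\prod_{j=1}^3\Theta\begin{bmatrix}\alpha_j\\0\end{bmatrix}(W)$, while each summand on the right is $\prod_{j=1}^3\Theta\begin{bmatrix}\beta_j\\0\end{bmatrix}(3W)$, where $\beta_1,\beta_2,\beta_3$ are the columns of $A_0+A$. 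Since the columns of $A_0$ are $\frac{1}{3}(\alpha_1+\omega^2\alpha_2+\omega\alpha_3)$, $\frac{1}{3}(\alpha_1+\omega\alpha_2+\omega^2\alpha_3)$, $\frac{1}{3}(\alpha_1+\alpha_2+\alpha_3)$, this already reproduces the shape of (\ref{Qubic-3}); what remains is to identify $G_1$ with the summation set in $r$ and $s$.

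The hard part will be the explicit description of $G_1$. A general element is $\frac{1}{3}(\rho_1,\rho_2,\rho_3)$ modulo ${\rm Mat}(g,3;\mathfrak{O}_K)$, where for $(m_1,m_2,m_3)\in{\rm Mat}(g,3;\mathfrak{O}_K)$ one has $\rho_1=m_1+\omega^2 m_2+\omega m_3$, $\rho_2=m_1+\omega m_2+\omega^2 m_3$, $\rho_3=m_1+m_2+m_3$. From $1+\omega+\omega^2=0$ I obtain the single constraint $\rho_1+\rho_2+\rho_3=3m_1\equiv 0\pmod 3$. The arithmetic input is the ramification $3=-(\sqrt{-3})^2$ together with $\omega\equiv 1\pmod{\sqrt{-3}}$ (indeed $\omega-1=-\sqrt{-3}\,\omega^2$), which forces $\rho_1\equiv\rho_2\equiv\rho_3\pmod{\sqrt{-3}}$, and, using $\omega-\omega^2=\sqrt{-3}$, gives $\rho_2-\rho_1=\sqrt{-3}(m_2-m_3)$. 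I would therefore set $r\equiv\rho_1$ and $s\equiv\rho_2-\rho_1\pmod 3$: since $m_1$ is free, $r$ sweeps all of $(\mathfrak{O}_K/3\mathfrak{O}_K)^g$, while $s=\sqrt{-3}(m_2-m_3)$ sweeps $(\sqrt{-3}\mathfrak{O}_K/3\mathfrak{O}_K)^g$ independently of $r$. The constraint then yields $\rho_2\equiv r+s$ and $\rho_3\equiv -2\rho_1-s\equiv r-s\pmod 3$, which is precisely the distribution of $r$, $r+s$, $r-s$ across the three columns of (\ref{Qubic-3}). A short injectivity check on $(r,s)\mapsto\frac{1}{3}(r,r+s,r-s)$ modulo ${\rm Mat}(g,3;\mathfrak{O}_K)$ confirms ${}^\sharp G_1=9^g\cdot 3^g$ and that the $A$-sum runs over the stated set without repetition, completing the identification.
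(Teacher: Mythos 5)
Your proposal is correct and takes essentially the same route as the paper's own proof, which applies Theorem \ref{ThmRiemannRel} to the same data up to a permutation of the rows/columns of $T$ (the paper uses $T=\frac{1}{3}\bigl(\begin{smallmatrix}1&1&1\\1&\omega&\omega^2\\1&\omega^2&\omega\end{smallmatrix}\bigr)$, $P=3I_3$, $Q=I_3$), likewise gets $G_2=\{0\}$ from $T^{-1}=3\,{}^t\overline{T}\in{\rm Mat}(3,3;\mathfrak{O}_K)$, and parametrizes $G_1$ by the same pair $(r,s)$ with $s=\sqrt{-3}(m_2-m_3)$ and $-2r-s\equiv r-s\pmod{3\mathfrak{O}_K^g}$. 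Your only additions are cosmetic or supererogatory: the column ordering that puts $\alpha_1+\alpha_2+\alpha_3$ in the third slot, and the explicit count ${}^\sharp G_1=27^g$ with the injectivity check, which the paper omits but which is consistent with its argument.
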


\begin{proof}
Take the matrices
$$T
=\frac{1}{3}
\begin{pmatrix}
1 & 1 & 1 \\
1 & \omega & \omega^2 \\
1 & \omega^2 & \omega\\
\end{pmatrix},
\quad
P=3I_3 ,
\quad
Q=I_3.
$$
Since $T^{-1} = 3{}^t \overline{T}=
\begin{pmatrix}
1 & 1 & 1 \\
1 & \omega^2 & \omega \\
1 & \omega & \omega^2\\
\end{pmatrix}
\in {\rm Mat}(3,3; \mathfrak{O}_K)$,
the finite group $G_2$ of Theorem \ref{ThmRiemannRel} is equal to $\{0\}$.
Also, for $(m_1,m_2,m_3) \in {\rm Mat}(g,3;\mathfrak{O}_K),$
we have
$
(m_1,m_2,m_3) {}^t \overline{T} =\frac{1}{3} (m_1+m_2+m_3, m_1+\omega^2 m_2 +\omega m_3, m_1+\omega m_2+\omega^2 m_3).
$
Now, $1+\omega + \omega^2 =0$.
Then,
setting $r=m_1+\omega^2 m_2 + \omega_1 m_3$ and $r'=m_1+\omega m_2 +\omega^2 m_3$,
we obtain 
\begin{align*}
m_1 + m_2 +m_3 =3m_1 +(-m_1-\omega^2 m_2 -\omega m_3) +(-m_1-\omega m_2 - \omega^2 m_3)\equiv -r -r'
\quad \quad (\text{mod } 3 \mathfrak{O}_K^g).
\end{align*}
Moreover, set $s=r'-r =\sqrt{-3} (m_2-m_3)$. 
Note that $m_2-m_3$ can take any values of $\mathfrak{O}_K^g$.
Therefore, the finite group $G_1$ of Theorem \ref{ThmRiemannRel} becomes
$$
G_1=\left\{\frac{1}{3}(-2r-s,r,r+s)  \hspace{1mm} \Big| \hspace{1mm}  r\in (\mathfrak{O}_K/3\mathfrak{O}_K)^g, s\in(\sqrt{-3}\mathfrak{O}_K/3\mathfrak{O}_K)^g  \right\}.
$$
If we set
$A_0 {}^t \overline{T}^{-1} =(\alpha_1, \alpha_2, \alpha_3)\in {\rm Mat}(g,3; \frac{1}{3}\mathfrak{O}_K/\mathfrak{O}_K)$, 
we obtain
$A_0=\frac{1}{3}(\alpha_1 + \alpha_2 + \alpha_3, \alpha_1+\omega^2 \alpha_2+\omega \alpha_3, \alpha_1+\omega \alpha_2+\omega^2 \alpha_3).$
Since $-2 r \equiv r $ $(\text{mod } 3\mathfrak{O}_K)$ for any $r \in \mathfrak{O}_K$,
 we have the formula in question.
\end{proof}

If we take particular $\alpha_1,\alpha_2,\alpha_3 \in K^g$,
the equation
(\ref{Qubic-3}) becomes simple.
For example, by putting $ \alpha_1 = \alpha_2 = \alpha_3 ={}^t( v_1,\ldots, v_g) \in K^g$, we have the following equation:
\begin{align}\label{Cor1}
&\Theta ^3 \begin{bmatrix} \begin{pmatrix} v_1 \\  \vdots \\v_g \end{pmatrix} \\ 0 \end{bmatrix} (W)\notag\\
&=
\sum_{
\scriptsize{
\begin{matrix} r_1,\ldots,r_g \in \mathfrak{O}_K/3\mathfrak{O}_K \\ s_1,\ldots, s_g\in \sqrt{-3}\mathfrak{O}_K/3\mathfrak{O}_K \end{matrix}}  } 
\Theta \begin{bmatrix}    \frac{1}{3}\begin{pmatrix}r_1 \\ \vdots \\ r_g \end{pmatrix} \\ 0 \end{bmatrix} (3W)
\cdot\Theta \begin{bmatrix}   \frac{1}{3}\begin{pmatrix}r_1+s_1 \\  \vdots \\ r_g+s_g \end{pmatrix}  \\ 0 \end{bmatrix} (3W)
\cdot
\Theta\begin{bmatrix}  \begin{pmatrix} v_1\\ \vdots \\ v_g \end{pmatrix} +  
\frac{1}{3}\begin{pmatrix} r_1 -s_1 \\ \vdots \\ r_g -s_g \end{pmatrix} \\ 0 \end{bmatrix} (3W).
\end{align}
Additionally, by substituting
$\alpha_1 = {}^t( v_1,\ldots, v_g),$
$\alpha_2 =  {}^t( v_1,\ldots, v_g) +\frac{1}{\sqrt{-3}} {}^t(1,\ldots,1) $
and
$\alpha_3 =  {}^t( v_1,\ldots, v_g) -\frac{1}{\sqrt{-3}} {}^t(1,\ldots,1) $,
we have the following equation:
\begin{align}\label{Cor2}
& \Theta  \begin{bmatrix} \begin{pmatrix} v_1 \\  \vdots \\ v_g \end{pmatrix} \\ 0 \end{bmatrix} (W)
\cdot \Theta \begin{bmatrix} \begin{pmatrix} v_1\\  \vdots \\ v_g \end{pmatrix} +\frac{1}{\sqrt{-3}}  \begin{pmatrix} 1\\  \vdots \\ 1 \end{pmatrix}  \\ 0 \end{bmatrix} (W) 
\cdot \Theta \begin{bmatrix} \begin{pmatrix} v_1\\ \vdots \\ v_g \end{pmatrix} -\frac{1}{\sqrt{-3}}  \begin{pmatrix} 1\\  \vdots \\ 1 \end{pmatrix}  \\ 0 \end{bmatrix}  (W) \notag\\
&=
\sum_{
\scriptsize{
\begin{matrix} r_1,\ldots,r_g \in \mathfrak{O}_K/3\mathfrak{O}_K \\ s_1,\ldots, s_g\in \sqrt{-3}\mathfrak{O}_K/3\mathfrak{O}_K \end{matrix}}  } 
\Theta\begin{bmatrix} \frac{1}{3}\begin{pmatrix} -1 \\ \vdots \\ -1 \end{pmatrix} +  \frac{1}{3}\begin{pmatrix} r_1 \\ \cdots \\ r_g \end{pmatrix} \\ 0 \end{bmatrix} (3W) 
\cdot \Theta\begin{bmatrix} \frac{1}{3}\begin{pmatrix} 1 \\ \vdots \\ 1 \end{pmatrix} + \frac{1}{3}\begin{pmatrix} r_1+s_1 \\ \vdots \\ r_g+s_g \end{pmatrix} \\ 0 \end{bmatrix} (3W) \notag \\
&\hspace{10cm}
\cdot  \Theta\begin{bmatrix} \begin{pmatrix} v_1 \\ \vdots \\ v_g \end{pmatrix} + \frac{1}{3}\begin{pmatrix} r_1 -s_1 \\ \vdots \\ r_g - s_g \end{pmatrix} \\ 0 \end{bmatrix} (3W).
\end{align}

As mentioned previously,
the Riemann theta functions (\ref{RiemannTheta}) whose characteristics are in $\frac{1}{2}\mathbb{Z}/\mathbb{Z}$
have ample amount of applications (for example,  see  the beginning of Section 2.1).
Similarly,
it seems natural to study theta function of Proposition \ref{PropR-3}  
whose characteristics are in $\frac{1}{3}\mathfrak{O}_K/\mathfrak{O}_K.$
Remark that
$
 \mathfrak{O}_K /3\mathfrak{O}_K
=\left\{0,\pm1, \pm \sqrt{-3}, \pm 1 \pm \sqrt{-3}\right\} \simeq (\mathbb{Z}/3\mathbb{Z}) \oplus (\mathbb{Z} /3\mathbb{Z})$
and
$
\sqrt{-3} \mathfrak{O}_K /3\mathfrak{O}_K
=\left\{0, \pm \sqrt{-3}\right\} \simeq (\mathbb{Z}/3\mathbb{Z}).
$
Now,
let $\rho_{j,k} \in \mathbb{Z}/3\mathbb{Z}=\{0,1,-1\}$ 
for $j\in \{1,\ldots, g\} , k\in \{1,2\}$.
We introduce a theta function labeled by $\frac{1}{3}\mathbb{Z}/\mathbb{Z}$:
\begin{align}\label{ThetaBmat}
\Theta\begin{Bmatrix}
\rho_{11},& \rho_{12}\\
\vdots & \vdots \\
\rho_{g1},&\rho_{g2}
\end{Bmatrix}
(W)
=\Theta\begin{bmatrix}
\begin{pmatrix}
\frac{1}{3} \rho_{11} + \frac{1}{\sqrt{-3}} \rho_{12}\\
\vdots\\
\frac{1}{3} \rho_{g1} + \frac{1}{\sqrt{-3}} \rho_{g2}
\end{pmatrix}
\\
0
\end{bmatrix}
(W).
\end{align}
The function (\ref{ThetaBmat}) simplifies the relations (\ref{Qubic-3}),
if $\alpha_1,\alpha_2, \alpha_3 \in \frac{1}{3}\mathfrak{O}_K^g$.
In fact, the all characteristics appearing in (\ref{Qubic-3}) are closed in $\frac{1}{3} \mathfrak{O}_K/\mathfrak{O}_K$.
For example,
in the simplest case such that $v_1=\cdots=v_g=0$, 
the equation (\ref{Cor1}) becomes 
\begin{align*}
&\left(\Theta\begin{Bmatrix}
0, & 0 \\
\vdots & \vdots \\
0, & 0 
\end{Bmatrix}(W)\right)^3
=
\sum_{\rho_{11},\rho_{12},\ldots,\rho_{g1},\rho_{g2}\in \{0,1,-1\}} 
\sum_{\sigma_{12},\ldots,\sigma_{g2} \in \{0,1,-1\}}\\
&\hspace{5cm}
\Theta\begin{Bmatrix} 
\rho_{11},& \rho_{12}\\
\vdots & \vdots \\
\rho_{g1},&\rho_{g2}
 \end{Bmatrix}(3W)
 \cdot
 \Theta\begin{Bmatrix} 
\rho_{11},& \rho_{12}+\sigma_{12}\\
\vdots & \vdots \\
\rho_{g1},&\rho_{g2}+\sigma_{g2}
 \end{Bmatrix}(3W)
  \cdot
 \Theta\begin{Bmatrix} 
\rho_{11},&\rho_{12}- \sigma_{12}\\
\vdots & \vdots \\
\rho_{g1},&\rho_{g2}-\sigma_{g2}
 \end{Bmatrix}(3W).
\end{align*}
Similarly,
if we put $v_1=\cdots=v_g=0$,
the equation (\ref{Cor2}) becomes 
\begin{align*}
&\Theta\begin{Bmatrix}
0, & 0 \\
\vdots & \vdots \\
0, & 0 
\end{Bmatrix}(W)
\cdot
\Theta\begin{Bmatrix}
0, & 1 \\
\vdots & \vdots \\
0, & 1 
\end{Bmatrix}(W)
\cdot
\Theta\begin{Bmatrix}
0, & -1 \\
\vdots & \vdots \\
0, & -1 
\end{Bmatrix}(W)\\
&=
\sum_{\rho_{11},\rho_{12},\ldots,\rho_{g1},\rho_{g2}\in \{0,1,-1\}} 
\sum_{\sigma_{12},\ldots,\sigma_{g2} \in \{0,1,-1\}}\\
&\hspace{2.5cm}
\Theta\begin{Bmatrix} 
\rho_{11}-1,& \rho_{12}\\
\vdots & \vdots \\
\rho_{g1}-1,&\rho_{g2}
 \end{Bmatrix}(3W)
 \cdot
 \Theta\begin{Bmatrix} 
\rho_{11}+1,& \rho_{12}+\sigma_{12}\\
\vdots & \vdots \\
\rho_{g1}+1,&\rho_{g2}+\sigma_{g2}
 \end{Bmatrix}(3W)
  \cdot
 \Theta\begin{Bmatrix} 
\rho_{11},&\rho_{12}- \sigma_{12}\\
\vdots & \vdots \\
\rho_{g1},&\rho_{g2}-\sigma_{g2}
 \end{Bmatrix}(3W).
\end{align*}
Although the above relations can be expressed in simpler forms
via  (\ref{ThetaRed}),
we will not pursue them in this paper.

\subsection{Examples of quartic theta relations for $\mathbb{Q}\left(\sqrt{-1}\right)$ }

In this subsection, we set $d=1$, $K=\mathbb{Q}\left(\sqrt{-1}\right)$, $\mathfrak{O}_K=\mathbb{Z}[i]$, where $i=\sqrt{-1}$.
Let us study the theta functions (\ref{ThetaAB}) associated with $\mathbb{Q}\left( \sqrt{-1}\right)$.
Since  $\mathfrak{O}_K$  is just the ring of Gaussian integers,
there are already various research for such theta functions (see \cite{F}, \cite{Mat}, \cite{MT}).
We give the following theta relation as an application of Theorem \ref{ThmRiemannRel}.

\begin{prop}\label{PropG}
Let $K=\mathbb{Q}\left( \sqrt{-1}\right)$. 
Take $\alpha_1,\alpha_2,\alpha_3,\alpha_4 \in K^g$.
For $a\in K^g$,
let $\Theta\begin{bmatrix} a \\ 0 \end{bmatrix}(W) $ be the theta function (\ref{theta1}) associated with $K$.
Then, it holds
\begin{align*}
&\Theta\begin{bmatrix} \alpha_1 \\ 0 \end{bmatrix} (W)\cdot
\Theta\begin{bmatrix} \alpha_2 \\ 0 \end{bmatrix} (W)\cdot
\Theta\begin{bmatrix} \alpha_3 \\ 0 \end{bmatrix} (W)\cdot 
\Theta\begin{bmatrix} \alpha_4 \\ 0 \end{bmatrix} (W) \notag \\
&=
\sum_{
\scriptsize{
\begin{matrix} r \in (\mathfrak{O}_K/4\mathfrak{O}_K)^g \\ s_1,s_2 \in (2\mathfrak{O}_K/4\mathfrak{O}_K)^g \end{matrix}}} 
\Theta\begin{bmatrix}  \frac{\alpha_1 -i \alpha_2 -i \alpha_3 -\alpha_4 +r}{4}\\ 0 \end{bmatrix} (4W) 
\cdot\Theta\begin{bmatrix}  \frac{-i\alpha_1 +  \alpha_2 -  \alpha_3-i \alpha_4 -i r +s_1}{4} \\ 0 \end{bmatrix} (4W)\notag\\
&\hspace{5cm}\cdot\Theta\begin{bmatrix}   \frac{i \alpha_1 +  \alpha_2  -  \alpha_3 +i \alpha_4 -i r+s_2 }{4} \\ 0 \end{bmatrix} (4W)
\cdot\Theta\begin{bmatrix} \frac{\alpha_1 +i\alpha_2 +i\alpha_3 -\alpha_4 -r- i s_1 +i s_2 }{4} \\ 0 \end{bmatrix} (4W).
\end{align*}
\end{prop}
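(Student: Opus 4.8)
The plan is to apply Theorem \ref{ThmRiemannRel} with $h=4$, in the same spirit as the proofs of Propositions \ref{Prop01}--\ref{PropR-3}, to the pair
\[
T=\frac{1}{4}\begin{pmatrix} 1 & i & i & -1 \\ i & 1 & -1 & i \\ -i & 1 & -1 & -i \\ 1 & -i & -i & -1 \end{pmatrix},\qquad P=4I_4,
\]
for which $Q={}^t\overline{T}PT=I_4$. This $T$ is not guessed but read off from the target: since the left-hand side is $\prod_{j=1}^4\Theta\begin{bmatrix}\alpha_j\\0\end{bmatrix}(W)$, identity (\ref{ProdTheta1}) rewrites it as $\Theta^{Q}\begin{bmatrix}(\alpha_1,\alpha_2,\alpha_3,\alpha_4)\\O\end{bmatrix}(W)$ with $Q=I_4$, and the appearance of $\Theta(\cdot)(4W)$ on the right forces $P=4I_4$ through (\ref{ProdTheta2}). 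The four ``constant'' characteristics on the right (the terms free of $r,s_1,s_2$) must then be the columns of $A_0=(\alpha_1,\ldots,\alpha_4){}^t\overline{T}$, and matching these entries determines ${}^t\overline{T}$, hence $T$.

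First I would verify the hypotheses of Theorem \ref{ThmRiemannRel}: $T\in\mathrm{GL}_4(K)$, $P$ positive-definite Hermitian, and ${}^t\overline{T}PT=I_4$ (equivalently ${}^t\overline{T}T=\tfrac14 I_4$). A direct computation gives $T^{-1}=4\,{}^t\overline{T}\in{\rm Mat}(4,4;\mathfrak{O}_K)$, so ${\rm Mat}(g,4;\mathfrak{O}_K)T^{-1}\subseteq{\rm Mat}(g,4;\mathfrak{O}_K)$ and the group $G_2$ collapses to $\{0\}$. Then the outer sum over $B$ in (\ref{RiemannRel}) has a single term $B=0$, so $\widehat{B}=0$ and every exponential prefactor is $1$; together with $B_0=O$ this reduces the theorem to $\Theta^{Q}\begin{bmatrix}A_0{}^t\overline{T}^{-1}\\O\end{bmatrix}(W)=\sum_{A\in G_1}\Theta^{P}\begin{bmatrix}A_0+A\\O\end{bmatrix}(W)$.

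The main work is the explicit description of $G_1$. I would compute, for $(m_1,m_2,m_3,m_4)\in{\rm Mat}(g,4;\mathfrak{O}_K)$,
\[
(m_1,m_2,m_3,m_4){}^t\overline{T}=\tfrac14(p_1,p_2,p_3,p_4),
\]
set $r:=p_1=m_1-im_2-im_3-m_4$, and verify the key relations $s_1:=p_2+ir=2(m_2-im_4)$ and $s_2:=p_3+ir=2(im_1+m_2)$, together with $p_4\equiv -r-is_1+is_2\pmod{4\mathfrak{O}_K}$. These exhibit $s_1,s_2\in 2\mathfrak{O}_K$ and show that the fourth coordinate is determined by $(r,s_1,s_2)$. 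Using the spare parameter $m_3$ to decouple $r$ from $s_1,s_2$, I would argue that $(r,s_1,s_2)$ range \emph{independently} over $(\mathfrak{O}_K/4\mathfrak{O}_K)^g\times(2\mathfrak{O}_K/4\mathfrak{O}_K)^g\times(2\mathfrak{O}_K/4\mathfrak{O}_K)^g$, so that
\[
G_1=\Bigl\{\tfrac14\bigl(r,\,-ir+s_1,\,-ir+s_2,\,-r-is_1+is_2\bigr)\ \Big|\ r\in(\mathfrak{O}_K/4\mathfrak{O}_K)^g,\ s_1,s_2\in(2\mathfrak{O}_K/4\mathfrak{O}_K)^g\Bigr\}.
\]

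Finally, setting $A_0{}^t\overline{T}^{-1}=(\alpha_1,\alpha_2,\alpha_3,\alpha_4)$ yields $A_0=(\alpha_1,\ldots,\alpha_4){}^t\overline{T}$, whose four columns are exactly the $\alpha$-parts of the characteristics in the statement; adding the elements of $G_1$ above and expanding each $\Theta^{P}\begin{bmatrix}A_0+A\\O\end{bmatrix}(W)$ via (\ref{ProdTheta2}) into a product of four factors $\Theta(\cdot)(4W)$ produces the asserted identity. The hard part will be the modular bookkeeping of the previous paragraph: justifying that the correct moduli are $4\mathfrak{O}_K$ for $r$ and $2\mathfrak{O}_K/4\mathfrak{O}_K$ for $s_1,s_2$ (in particular that the factor is $2$, coming from the expressions $2(m_2-im_4)$ and $2(im_1+m_2)$), and that the three parameters are genuinely free rather than constrained—this is precisely what guarantees the sum on the right runs over the stated index set with neither repetition nor omission.
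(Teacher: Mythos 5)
Your proposal is correct and takes essentially the same route as the paper's proof: the identical pair $T=\frac{1}{4}\begin{pmatrix} 1 & i & i & -1 \\ i & 1 & -1 & i \\ -i & 1 & -1 & -i \\ 1 & -i & -i & -1 \end{pmatrix}$, $P=4I_4$, $Q=I_4$, the observation $T^{-1}=4\,{}^t\overline{T}\in{\rm Mat}(4,4;\mathfrak{O}_K)$ forcing $G_2=\{0\}$, and the same parametrization $G_1=\left\{\frac{1}{4}\left(r,\,-ir+s_1,\,-ir+s_2,\,-r-is_1+is_2\right)\right\}$, your auxiliary quantities $s_1=2(m_2-im_4)$ and $s_2=2(im_1+m_2)$ differing from the paper's $2(im_2+m_4)$ and $2(m_1-im_2)$ only by unit factors. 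The freeness bookkeeping you flag as the hard part is settled exactly as you sketch (the spare parameter $m_3$ frees $r$, and $m_1,m_4$ give $s_1,s_2$ independently), which is the level of detail the paper itself supplies.
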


\begin{proof}
Let us consider the matrices 
$$
T=\frac{1}{4} \begin{pmatrix} 1 & i & i & -1 \\ 
 i & 1& -1& i  \\ 
 -i & 1& -1& -i \\ 
 1 & -i& -i& -1  
 \end{pmatrix},
 \quad
 P=4I_4,
 \quad
 Q=I_4.
 $$
 Then, we have $T^{-1} = 4{}^t \overline{T} 
 =\begin{pmatrix}
 1 & -i & i & 1 \\
 -i & 1 &1 & i \\
 -i & -1 & -1 &i \\
-1 & -i & i & -1
 \end{pmatrix} \in {\rm Mat } (4,4; \mathfrak{O}_K).$
 Thereby, the group $G_2$ of Theorem \ref{ThmRiemannRel} is equal to $\{0\}.$ 
 Letting $(m_1,m_2,m_3,m_4)\in {\rm Mat}(g,4;\mathfrak{O}_K)$, 
 we have
 $(m_1,m_2,m_3,m_4){}^t \overline{T} 
 =\frac{1}{4} (r,r',r'',r'''), $ 
 where
 \begin{align*}
 \begin{cases}
 & r = m_1 -i m_2 -i m_3 - m_4,\\ 
 & r' = -i m_1 +m_2 - m_3 -i m_4,\\
 & r'' = i m_1 +m_2 -m_3 +i m_4,\\
 & r''' = m_1 + i m_2 + i m_3 -m_4.
 \end{cases}
 \end{align*}
Then, $i r'-r =2(im_2 +m_4) $ and $-i r'' + r =2(m_1 -im_2)$ hold. 
Here, $im_2 +m_4$ and  $m_1 -im_2$ can take any values of $\mathfrak{O}_K^g$ respectively.
Also, 
$$
r+i r' -i r'' +r''' =4m_1 \equiv 0 \quad (\text{mod } 4\mathfrak{O}_K^g).
$$
Therefore, the finite group $G_1$ of Theorem \ref{ThmRiemannRel} has an expression
$$
G_1=\left\{\frac{1}{4}(r,-ir +s_1, -ir +s_2 , -r -s_1 i +s_2 i) \hspace{1mm} \bigg| \hspace{1mm} r\in (\mathfrak{O}_K/4\mathfrak{O}_K)^g, s_1,s_2\in (2\mathfrak{O}_K/4\mathfrak{O}_K)^g  \right\}.
$$
Setting $A_0 {}^t \overline{T}^{-1} =(\alpha_1,\alpha_2,\alpha_3,\alpha_4)$, 
we obtain $A_0=\frac{1}{4}(\alpha_1 -i \alpha_2 -i \alpha_3 - \alpha_4, -i \alpha_1 +\alpha_2 - \alpha_3 -i \alpha_4, i \alpha_1 +\alpha_2 -\alpha_3 +i \alpha_4, \alpha_1 + i \alpha_2 + i \alpha_3 -\alpha_4)$.
By summarizing the above, we have the formula in question. 
\end{proof}

Note that
$
\mathfrak{O}_K/4\mathfrak{O}_K 
=\{\rho_1 + \rho_2 i \mid  \rho_1,\rho_2 \in \mathbb{Z}/4\mathbb{Z}\}.
$
Now, we set
\begin{align}\label{ThetaBmat1}
\Theta \begin{Bmatrix}
\rho_{11}, & \rho_{12}\\
\vdots & \vdots \\
\rho_{g1}, & \rho_{g2}\\
\end{Bmatrix}
(W)
=
\Theta \begin{bmatrix}
\begin{pmatrix}
\frac{1}{4} \rho_{11} + \frac{i}{4} \rho_{12}\\
\vdots \\
\frac{1}{4} \rho_{g1} + \frac{i}{4} \rho_{g2}\\
\end{pmatrix}
\\
0 
\end{bmatrix}(W),
\end{align}
for $\rho_{j,k} \in \mathbb{Z}/4\mathbb{Z} =\{0,1,2,3\}$ for $j\in \{1,\ldots,g\}, k\in \{1,2\}$. 
The expression (\ref{ThetaBmat1}) enables us to obtain  simple forms of the equation of the Proposition \ref{PropG}, 
if $\alpha_1,\alpha_2, \alpha_3,\alpha_4 \in \frac{1}{4} \mathfrak{O}_K^g$.
For example, if we put 
$\alpha_1=\alpha_2=\alpha_3=\alpha_4=0$,
we obtain an equation
\begin{align*}
&\left(\Theta\begin{Bmatrix}
0, &0 \\ 
\vdots & \vdots \\
 0, &0 
\end{Bmatrix}
(W) \right)^4\\
&=
\sum_{\rho_{11},\rho_{12},\ldots,\rho_{g1},\rho_{g2}\in \{0,\pm1,2\}} 
\sum_{\scriptsize
\begin{matrix}\sigma_{11}, \sigma_{12},\ldots,\sigma_{g1},\sigma_{g2} \in \{0,2\}\\
\sigma'_{11}, \sigma'_{12},\ldots,\sigma'_{g1},\sigma'_{g2} \in \{0,2\}\end{matrix}
}\\
&\hspace{1.5cm}
\Theta\begin{Bmatrix}\rho_{11}, & \rho_{12}\\
\vdots & \vdots \\
\rho_{g1}, & \rho_{g2}  \end{Bmatrix} (4W) 
\cdot 
\Theta\begin{Bmatrix}\rho_{12} + \sigma_{11}, & -\rho_{11} +\sigma_{12}\\
\vdots & \vdots \\
\rho_{g2}+\sigma_{g1}, & -\rho_{g1} +\sigma_{g2}  \end{Bmatrix} (4W)
\cdot
\Theta\begin{Bmatrix}\rho_{12} + \sigma'_{11}, & -\rho_{11} +\sigma'_{12}\\
\vdots & \vdots \\
\rho_{g2}+\sigma'_{g1}, & -\rho_{g1} +\sigma'_{g2}  \end{Bmatrix} (4W) \\
&\hspace{8.8cm}
\cdot
\Theta\begin{Bmatrix}-\rho_{11} +\sigma_{12} -\sigma_{12}', & -\rho_{12}-\sigma_{11}+\sigma_{11}'\\
\vdots & \vdots \\
-\rho_{g1}+\sigma_{g2}-\sigma_{g2}' , & -\rho_{g2} -\sigma_{g1} +\sigma_{g1}'  \end{Bmatrix} (4W) .
\end{align*}

\section*{Acknowledgment}
The  author is supported by JSPS Grant-in-Aid for Scientific Research (22K03226), JST FOREST Program (JPMJFR2235) and  MEXT LEADER.
He is thankful to Professor Hironori Shiga for valuable discussions.
He also appreciates the reviewer's accurate suggestions for improvement of  the manuscript.

\vspace{1mm}
\begin{center}
\hspace{8.8cm}\textit{Atsuhira  Nagano}\\
\hspace{8.8cm}\textit{Faculty of Mathematics and Physics,}\\
 \hspace{8.8cm} \textit{Institute of Science and Engineering,}\\
\hspace{8.8cm}\textit{Kanazawa University,}\\
\hspace{8.8cm}\textit{Kakuma, Kanazawa, Ishikawa}\\
\hspace{8.8cm}\textit{920-1192, Japan}\\
 \hspace{8.8cm}\textit{(E-mail: atsuhira.nagano@gmail.com)}
  \end{center}


\begin{thebibliography}{20}

\bibitem[C]{C} D. Cox,
\newblock{\em The arithmetic-geometric mean of Gauss,}
\newblock{L'Enseignement Math., {\bf 30},  (1984), 275-330.}


\bibitem[DK]{DK} T. Dern and A. Krieg,
\newblock{\em Graded rings of Hermitian modular forms of degree $2$,}
\newblock{Manuscripta Math., {\bf 110}, (2003), 251-272.}


\bibitem[F]{F} E. Freitag,
\newblock{\em Modulformen zweiten Grades zum rationalen und Gau{\ss}chen Zahlk\"orper,}
\newblock{Sitzungsber Heidelb. Akad. Wiss., {\bf 1}, 1967.}







\bibitem[I]{I} 
J. Igusa,
\newblock {\em Modular Forms and Projective Invariants,}
\newblock {Amer. J. Math.,  {\bf 89}, (1967), 817-855.}




\bibitem[K]{K} S. Koizumi,
\newblock{\em The equations defining abelian varieties and modular functions,}
\newblock{Math. Ann., {\bf 242}, (1979), 127-145.}


\bibitem[Ma1]{Mat} K. Matsumoto,
\newblock{\em Theta functions on the bounded symmetric domain of type $I_{2,2}$ and the period map of a $4$-parameter
family of K3 surfaces,}
\newblock{Math. Ann., {\bf 295}, (1993), 383-409.}

\bibitem[Ma2]{Ma} K. Matsumoto,
\newblock{\em Algebraic relations among some theta functions on the bounded symmetric domain of type $I_{r,r}$,}
\newblock{Kyushu J. Math., {\bf 60}, (2006), 63-77.}


\bibitem[Mu]{Mu} D. Mumford,
\newblock{\em Tata Lectures on Theta I,}
\newblock{Birkh\"auser, Boston, 1983.}


\bibitem[MNY]{MNY} K. Matsumoto, H. Nishi and M. Yoshida,
\newblock{\em Automorphic functions for the Whitehead-link-complement group,}
\newblock{Osaka J. Math., {\bf 43} (4), (2006), 839-876.}


\bibitem[MSY]{MSY} 
K. Matsumoto, T. Sasaki and M. Yoshida, 
\newblock {\em The monodromy of the period map of a
$4$-parameter family of $K3$ surfaces and the hypergeometric function of type $(3, 6)$,}
\newblock {Internat. J. Math., {\bf 3},  (1992), 1-164.}


\bibitem[MT]{MT} K. Matsumoto and T. Terasoma,
\newblock{\em Thomae type formula for $K3$ surfaces given by double covers of the projective plane branching along six lines,}
\newblock{J. Reine Angew. Math., {\bf 669}, (2012), 121-149.}


\bibitem[NS]{NS} 
A. Nagano and H. Shiga,
\newblock {\em On Kummer-like surfaces  attached to singularity and  modular forms,}
\newblock {Math. Nachr., (2023), published online,  DOI:10.1002/mana.202100552.}






\end{thebibliography}
\end{document}